\gdef\No{{\select@language{english}\textnumero}}
\numberwithin{equation}{section}
\theoremstyle{plain}
\newtheorem{theorem}{Theorem}
\newtheorem{lemma}{Lemma}[section]
\theoremstyle{definition}
\newtheorem{proof}{Proof}
\begin{document}

\title{ On short Edgeworth expansions for
weighted sums of random vectors}
\author[S.\,A.~Ayvazyan]{S.\,A.~Ayvazyan}
\address{Lomonosov Moscow State University.}
\email{ayvazyansky@yandex.ru}

\date{}
\udk{}

\maketitle

\begin{fulltext}

\begin{abstract}
The "typical"$\;$asymptotic behavior of the weighted sums of independent, identically distibuted random vectors in $k$-dimensional space is considered. It is shown that  under finitnes of fifth absolute moment of an individual term the rate of convergence by Edgeworth correction in the multivariate central limit theorem is of order $O(1/n^{3/2})$. This extends the one-dimensional Bobkov(2020) result. 
\end{abstract}

\begin{keywords}
Edgeworth correction, standart Gaussian random vector,  multivariate central limit theorem.
\end{keywords}

\markright{ On short Edgeworth correct. for weighted sum of random vectors}


\section {Introduction and Main Result} \label {1}

Let $ X_1, X_2, \dots, X_n$ be independent, identically distributed random vectors in $\mathbb{R}^k$ with finite third absolute moment  $\beta_3=\mathbb{E}\|X_1\|^3< \infty$, zero mean $\mathbb{E}X_1={0}$ and unit covariance matrix $ \mathrm{Cov}(X_1)=I$.  
Let $\phi(x)$ be the density function of standard Gaussian random vector in $\mathbb{R}^k$ with zero mean and unit covariance matrix.
Denote by $ \mathfrak{B}$, the class of all Borel convex sets in $\mathbb{R}^k$. 
Sazonov~\cite{Saz} obtained the following error bound of approximation for distribution of the normalized sum of random vectors by the standard multivariate normal law:  
 \begin{equation}\label{d1}\sup\limits_{B \in  \mathfrak{B}}\left|\mathbb{P}\left(\frac{1}{\sqrt n}\sum_{i=1}^{n} X_i \in B \right)-\int_{B}\phi(x)dx) \right|\leq C(k)\frac{\beta_3}{\sqrt n},
 \end{equation} 
where $C(k)$ depends on dimension $k$ only. 

The bound (\ref{d1})  is optimal one in general. Moreover, the rate $O(1/\sqrt{n})$ can not be improved under higher order moment assumptions. This is easy to show in one-dimensional case $k=1$ taking $X$ such that $$
\mathbb{P}(X_1=1) = \mathbb{P}(X_1=-1) = 1/2.\nonumber$$

However, the situation is different when we consider a weighted sum
$$
\theta_1X_1 + \dots + \theta_nX_n,
$$
where $\sum_{j=1}^n\theta_j^2=1$. If we are interested in  
the typical behavior of these sums for most of $\theta$ in the sense of the normalized Lebesgue measure $\lambda_{n-1}$ on the unit sphere $$S^{n-1}=\{(\theta_1,\dots,\theta_n): \sum_{j=1}^n\theta_j^2=1\},$$ then we have to refer to a recent remarkable result due to Klartag and Sodin \cite{KS}. In \cite{KS} they  have showed that in one-dimensional case $k=1$ for any $\rho: 1>\rho>0,$ there exists a set $\mathbb {Q} \subseteq S^{n-1}: \lambda_{n-1}(\mathbb {Q}) > 1 - \rho,$ and a constant $C(\rho)$ depending on $\rho$ only such that for any $\theta=(\theta_1,\dots,\theta_n) \in \mathbb {Q}$ one has
\begin{equation}\label{d2}
\sup\limits_{a,b \in \mathbb{R}, a<b}\left|\mathbb{P}\left(a \leq \sum_{i=1}^{n}\theta_iX_i \leq b \right)-\int_a^b \phi(x)dx \right|\leq C(\rho)\frac{\beta_4}{n},
\end{equation}
where $\beta_4=\mathbb{E}|X_1|^4$ and $C(\rho) \leq C \log^2(\frac{1}{\rho}) $ with some absolute constant $C$.   It is clear that $C(\rho) \to \infty $ as $\rho$ tends to $0$. And the case of equal weights, that is when $\theta_i = 1/\sqrt{n}$ for all $i = 1, \dots , n$ is the worst case in the sense of closeness of distribution function  of weighted sum to the standard normal distribution function. The result was extended to multidimensional case in \cite{Ay}.

For independent, identically distributed random variables Bobkov \cite{Bob} refined the rates of approximation for distributions of weighted sums (\ref{d2}) up to order $O(n^{-3/2})$ by using the Edgeworth correction of the fourth order provided $\beta_5=\mathbb{E}|X_1|^5 < \infty$. 
The following corrected normal "distribution"$\;$ function was introduced:
$$
G(x)=\Phi(x)-\frac{\beta_4-3}{8n}(x^3-3x)\phi(x),
$$
where  $\Phi(x)=\int_{-\infty}^{x}\phi(y)dy$. Using $\mathbb{E}_\theta$ to denote the integral over the normalized Lebesgue measure on the unit sphere $S^{n-1}$, Bobkov proved, that if the third moment $\mathbb {E} X_1^3=0$,  then the rate of convergence by $G(x)$ can be improved
\begin{equation}\label{d3}
\mathbb{E}_\theta \sup_x\Big| P\Big(\sum_{i=1}^{n}\theta_iX_i\leq x\Big)-G(x)\Big| \leq C \frac {\beta_5} {n^{3/2}}, 
\end{equation}
where $ C $ is a universal constant. 

In multidimensional case for a given positive integer vector $\alpha$ we define $\alpha$-moment of random vector $X_1$ as $\mu_\alpha=\mathbb{E}X_1^{\alpha}$ and  determine density $g(x)$ of corrected  normal "distribution":
$$\phi(x)+\frac{3}{n}\phi(x)\Big(-\frac{1}{24}\Big[(\mu_{4,0,..,0}-3)(3-6x_1^2+x_1^4)
+\dots+(\mu_{0,0,..,4}-3)(3-6x_k^2+x_k^4)\Big]
$$
$$
-\frac{1}{6}\Big[\mu_{3,1,..,0}(x_1^3x_2-3x_1x_2)+\dots+\mu_{0,..,1,3}(x_k^3x_{k-1}-3x_kx_{k-1})\Big]
$$ 
$$
-\frac{1}{4}\Big[(\mu_{2,2,..,0}-1)(1-x_1^2-x_2^2+x_1^2x_2^2)+\dots+(\mu_{0,..,2,2}-1)(1-x_k^2-x_{k-1}^2+x_{k-1}^2x_k^2)\Big]
$$ 
$$
-\frac{1}{2}\Big[\mu_{2,1,1,..,0}(x_1^2x_2x_3-x_2x_3)+\dots+\mu_{0,..,1,1,2}(x_k^2x_{k-1}x_{k-2}-x_{k-1}x_{k-2})\Big]
$$
$$
-\Big[\mu_{1,1,1,1,..,0}(x_1x_2x_3x_4)+\dots+\mu_{0,..,1,1,1,1}(x_{k-3}x_{k-2}x_{k-1}x_{k})\Big]\Big).
$$
This work examines the rate of convergence of a weighted sum of independent, identically distributed random vectors to a multivariate standard normal vector by Edgeworth correction. The main achievement of the work is the extension of Bobkov's result (\ref{d3}) to the multidimensional case using the signed measure $G(B)=\int_{B}g(x)dx$, presented in the following theorem:
 
\begin {theorem} \label {t3}
Let $ X_1, X_2,\dots,X_n $ be independent, identically distributed random vectors of dimension $ k $ with zero mean $ \mathbb {E} X_1 = \overline {0} $, unit covariance matrix $ \mathrm{Cov}(X_1) = I $ and finite fifth absolute moments $\beta_5=\mathbb {E} \| X_1 \| ^ 5 $. Denote by $ \mathfrak {B} $   the class of all convex Borel sets.  If for any positive integer vector $\alpha$ with $|\alpha|=3$ holds $\mathbb {E}X_1^\alpha=0$, then one has
\begin{equation}\label{d0}
    \mathbb {E}_\theta\sup_{B \in \mathfrak {B}}\Big|P\Big(\sum_{i=1}^{n}\theta_iX_i \in B\Big)-G(B)\Big| \leq C (k) \frac {\beta_5} {n^{3/2}}, 
 \end{equation}
where $ C (k) $ is a universal constant depending only on the dimension $ k. $
\end {theorem}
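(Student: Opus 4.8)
The plan is to proceed through characteristic functions combined with a multivariate smoothing inequality for the class $\mathfrak{B}$ of convex sets. Fix $\theta \in S^{n-1}$ and set $f(t)=\mathbb{E}\exp(i\langle t,X_1\rangle)$ for $t\in\mathbb{R}^k$, so that the characteristic function of the weighted sum is $\widehat f_\theta(t)=\prod_{i=1}^n f(\theta_i t)$. First I would carry out the cumulant expansion: since $\mathbb{E}X_1=0$, $\mathrm{Cov}(X_1)=I$, and all third moments vanish by hypothesis,
\[
\log f(t) = -\tfrac12|t|^2 + P_4(it) + R(t), \qquad |R(t)|\le C\beta_5|t|^5,
\]
where $P_4$ is the homogeneous degree-four polynomial whose coefficients are the fourth-order cumulants of $X_1$ (these are precisely the quantities $\mu_\alpha$, $|\alpha|=4$, that enter the density $g$). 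Because $P_4$ is homogeneous of degree four and $\sum_i\theta_i^2=1$, summing over $i$ yields
\[
\log \widehat f_\theta(t) = -\tfrac12|t|^2 + \Big(\sum_{i=1}^n\theta_i^4\Big)P_4(it) + \sum_{i=1}^n R(\theta_i t).
\]

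The decisive algebraic point is that the Fourier transform of the signed measure $G$ is exactly $\widehat G(t)=e^{-|t|^2/2}\big(1+\tfrac{3}{n}P_4(it)\big)$: each factor $H_4(x_j)\phi(x)$, $H_3(x_j)H_1\phi$, etc.\ in $g$ transforms into a monomial $(it)^\alpha e^{-|t|^2/2}$, and the prefactor $3/n$ is the value the sphere forces on $\sum_i\theta_i^4$. Exponentiating and expanding therefore gives
\[
\widehat f_\theta(t) - \widehat G(t) = e^{-|t|^2/2}\Big(\sum_{i=1}^n\theta_i^4-\tfrac{3}{n}\Big)P_4(it) + E_\theta(t),
\]
where $E_\theta(t)$ collects $\tfrac12(\sum_i\theta_i^4)^2 P_4(it)^2$, the remainder $\sum_i R(\theta_i t)=O\big(\beta_5|t|^5\sum_i|\theta_i|^5\big)$, and the cross terms. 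I would then apply a Bhattacharya--Rao / Sazonov type smoothing inequality for convex sets to bound $\sup_{B\in\mathfrak B}\big|P(\sum_i\theta_iX_i\in B)-G(B)\big|$ by a weighted integral of $|\widehat f_\theta-\widehat G|$ over a ball $|t|\le T$, plus a term controlling the oscillation of $G$ over $\varepsilon$-neighborhoods of convex boundaries (harmless since $g$ is a bounded smooth perturbation of $\phi$), plus a tail $\int_{|t|>T}|\widehat f_\theta(t)|\,dt$; the dimension dependence $C(k)$ enters through this smoothing constant and through the number of monomials in $P_4$.

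For each fixed $\theta$ this is a deterministic bound, so it remains to take $\mathbb{E}_\theta$. Two sphere moment facts drive the rate. The mean of $\sum_i\theta_i^4$ is $3/(n+2)$ and its standard deviation is of order $n^{-3/2}$, whence $\mathbb{E}_\theta\big|\sum_i\theta_i^4-\tfrac3n\big|=O(n^{-3/2})$; and $\mathbb{E}_\theta\sum_i|\theta_i|^5=O(n^{-3/2})$ (each $|\theta_i|\asymp n^{-1/2}$, so $|\theta_i|^5\asymp n^{-5/2}$ and there are $n$ terms). Together with $(\sum_i\theta_i^4)^2=O(n^{-2})$, these show the main term and all of $E_\theta$ contribute $O(\beta_5 n^{-3/2})$ on the small-frequency ball, since the $t$-integrals $\int e^{-|t|^2/2}|P_4(it)|\,|t|^{-1}dt$ and their analogues are finite constants controlled by $\beta_5$ via Lyapunov.

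The genuine obstacle is the large-frequency tail $\mathbb{E}_\theta\int_{|t|>T}\prod_{i=1}^n|f(\theta_i t)|\,dt$. Unlike the classical Edgeworth setting, no Cramér condition is imposed on $X_1$, so this cannot be controlled pointwise in $\theta$ (for a lattice $X_1$ the individual factors do not decay). Here I expect to follow the mechanism of Klartag--Sodin and Bobkov and use the averaging over $S^{n-1}$ itself as the smoothing device: for most $\theta$ the arguments $\theta_i t$ are spread over a range in which $|f|$ is bounded away from $1$ on enough coordinates to force the product to be small, so that after integrating in $\theta$ the tail is negligible for $T$ of polynomial size in $n$. Making this estimate uniform enough to feed into the convex-set smoothing inequality — reconciling the frequency decay from sphere-averaging with the uniformity over $\mathfrak{B}$ — is the hard step, and is where the finiteness of the fifth moment is really used.
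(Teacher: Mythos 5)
Your outline correctly identifies the two sphere-moment mechanisms that produce the rate (the concentration of $l_4(\theta)=\sum_i\theta_i^4$ around $3/(n+2)$ with fluctuation $O(n^{-3/2})$, and $\mathbb{E}_\theta\sum_i|\theta_i|^5=O(n^{-3/2})$), and this is indeed how the paper controls the term $\Psi_4-G$ and the Edgeworth remainder. But the proposal has a genuine gap exactly where you flag it: the high-frequency tail $\mathbb{E}_\theta\int_{\|t\|>T}\prod_i|f(\theta_i t)|\,dt$ is not an obstacle you may defer --- it is the core of the proof, and you leave it as a statement of intent. In the paper this is Lemma~7, which establishes $\mathbb{E}_\theta|\xi_j(\theta_j r)|^2\le 1-c\min\{\|r\|^2/n,\,1/\rho_{4,j}\}$ via the characteristic function $J_n$ of a single spherical coordinate (Klartag--Sodin's Lemma~3.3), combines the factors through a Brascamp--Lieb-type inequality on the sphere to decouple the $\theta_j$'s inside the product, and integrates over $\lambda\sqrt n/\beta_3\le\|t\|\le n^{3/2}/\beta_5$; an additional splitting on the event $\{16\rho_3>\beta_3/\sqrt n\}$ (handled by a large-deviation bound for $\rho_3$, Bobkov's Lemma~5.3.3) is needed because the pointwise mid-range bound is only valid for $\|t\|\le(16\rho_3)^{-1}$. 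None of this is sketched, and it is where the real work and the fifth-moment hypothesis enter.

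A second, structural omission: you work with the untruncated characteristic function $f$ throughout, but the expansion $\log f(t)=-\tfrac12\|t\|^2+P_4(it)+R(t)$ and the pointwise decay bounds needed on the intermediate frequency range cannot be controlled uniformly in $\theta$ without truncation, since a single large $|\theta_i|$ ruins the domain of validity. The paper's entire architecture rests on the truncated vectors $Y_j=X_j\mathbb{I}(\|\theta_jX_j\|\le1)$, $Z_j=Y_j-\mathbb{E}Y_j$, the decomposition $|F_X-G|\le|F_X-F_Y|+|\Psi_4-G|+|F_Y-\Psi_4|$, the recentering by $A_n=\sum_j\theta_j\mathbb{E}Y_j$ and the modified covariance $D$, together with the comparison lemmas (Lemmas~2--5) showing that the truncation perturbs cumulants, the Edgeworth polynomials and the Gaussian only by $O(\rho_5)$, which after $\mathbb{E}_\theta$ is $O(\beta_5 n^{-3/2})$. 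Finally, the convex-set smoothing inequality (Bhattacharya--Rao, Corollary~11.5 and Lemma~11.6) requires bounding derivatives $D^\alpha$ of the characteristic-function difference up to order $k+1$, not just the difference itself; your plan only mentions the zeroth-order integrand. So while the skeleton is aimed in the right direction, the proposal as written does not constitute a proof.
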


\section {Notation and auxiliary results} \label {2}
{\it Notation}
In the following the weighting coefficients $\theta_1,\theta_2, \dots, \theta_n$  belong to the unit sphere $S^{n-1} = \{\theta_1,\theta_2 , \dots , \theta_n: \sum_{j = 1 } ^ n \theta_j ^ 2 = 1 \}.$ For a  given  random vector $Z$ in $R^k$, a positive integer vector $\alpha$ and a positive integer $s$, we define the $\alpha$-moment as $\mu_\alpha(Z)=\mathbb{E }Z^\alpha$ and the absolute moment of order $s$ as $\rho_s(Z)=\mathbb{E}\|Z\|^s$.
Consider a sequence of independent, not necessarily identically distributed, random vectors $X_1,X_2,\dots,X_n$ with zero mean and unit covariance matrix. And introduce the following truncated random vectors $Y_j$ and $Z_j$ as
\begin{equation} \label {d8}
Y_j = X_j \mathbb{I} (\| \theta_j X_j \| \leq 1),\;\;
Z_j = Y_j- \mathbb {E} Y_j,\;j=1\dots n,
\end{equation}
where $\mathbb{I}(A)$ is the indicator function of event $A$.
Truncated random vectors have an absolute moment of any order, however, their moments may not coincide with the moments of $X_1, X_2,\dots, X_n$. Therefore, we introduce 
\begin{equation} \label {d10}
A_n = \sum_{j = 1} ^{n} \theta_j \mathbb {E} Y_j,\;\;
D = \sum_{j = 1} ^{n} \theta_j ^ 2 \mathrm{Cov}(Z_j).
\end{equation}
Also below we denote for $j=1\dots n$, $|\alpha|\geq 1$ and $s\geq 1$
\begin{equation} \label {d4}
\mu_{\alpha,j} = \mu_\alpha(X_j),\;\;\rho_{s,j} = \rho_s(X_j),
\end{equation}
\begin{equation} \label {d11}
\tilde{\mu}_{\alpha,j} = \mu_\alpha(Z_j),\;\;\tilde{\rho}_{s,j} = \rho_s(Z_j),
\end{equation}
\begin{equation} \label {d6}
\rho_s = \sum_{j = 1} ^{n} \rho_{s}(\theta_iX_i),\;\;\beta_s = \frac {1} {n} \sum_{j = 1} ^ n \rho_ {s}(X_i),
\end{equation}
\begin{equation} \label {d12}
\tilde{\rho}_s = \sum_{j = 1}^n \rho_s (\theta_jZ_j), \;\;\eta_s = \sum_{j = 1}^n \rho_s (Q \theta_jZ_j),
\end{equation}
where matrix $Q^2=D^{-1}$.

To derive the result (\ref{d0}), the author is using the technique of proving the multivariate central limit theorem from the monograph by Bhattacharya and Ranga Rao \cite{B}, as well as the further development of methods and approaches proposed by Klartagam, Sodin \cite{KS} and Bobkov \cite{Bob}. The proof relies on the use of characteristic functions. This method is based on the Fourier transform and its extension, the Fourier-Stieltjes transform. The characteristic function of a random vector $Z$ is defined as
$$\varphi(t)= \mathbb {E} \exp (i \langle t,Z\rangle)
,$$
where $i$ is a unit imaginary number
Next, we define the logarithm of a non-zero complex number $z = r\exp(i\xi)$ as
$$
\log(z) = \log(r) + i \xi, \; r>0,\; \; \xi \in (-\pi,\pi ],
$$
therefore, we always take the so-called main branch of the logarithm. Let $Z$ have an absolute moment of order $m$, then in the neighborhood of zero the logarithm of the characteristic function expands into a Taylor series
\begin{equation} \label {d17_0}
\log (\varphi(t)) = \sum_ {1 \leq | \nu | \leq m} \kappa_ \nu (Z) \frac {(it) ^ \nu} {\nu!} + o (\| t \| ^ m),
\end{equation}
where $\kappa_\nu (Z)$ is the cumulant of the random vector $Z$ of order $\nu$. The following formal identity allows us to express cumulants in terms of
corresponding moments by equating the coefficients of $t^\nu$ on both sides (see Chapter 2, Section 6 in \cite{B})
\begin{equation} \label {d17}
\sum_{\nu\geq 1}\kappa_{\nu}(Z)\frac{(it)^\nu}{\nu!}=\sum_{i=1}^{\infty}(-1) ^{s+1}\frac{1}{s}\Big(\sum_{\nu\geq1}\mu_{\nu }(Z)\frac{ (it)^\nu }{\nu!}\Big)^s.
\end{equation}
From the identity (\ref{d17}) and the inequality $|\mu_\nu(Z)|\leq \rho_{|\nu|}(Z)$, it follows that for the absolute value of the cumulants one has
\begin{equation} \label {d18}
| \kappa_{\nu}(Z) | \leq c (\nu) \rho_{| \nu |}(Z),
\end{equation}
where $c (\nu)$ is a constant that depends only on the vector $\nu$. Let us introduce the following notation for $j=1\dots n,$ $|\nu|>1$
$$\tilde{\kappa}_{\nu,j}=\kappa_\nu(Z_j),\;\; \kappa_{\nu,j}=\kappa_\nu(X_j),$$
\begin{equation}\label{d20}
\tilde{\kappa}_ \nu = \kappa_ \nu \Big(\sum_ {j = 1} ^ n \theta_jZ_j\Big),\;\;
\kappa_ \nu = \kappa_ \nu \Big(\sum_ {j = 1} ^ n \theta_jX_j\Big).
\end{equation}
Since we are considering independent random variables, the characteristic function of the sum $\sum_{j = 1} ^ n \theta_jZ_j$ is calculated as
\begin{equation} \label {d17_1}
\hat{F}_Z(t)=\prod_{j=1}^n\varphi_j (\theta_j t),
\end{equation}
where
\begin{equation} \label {d17_2}
\varphi_j (t) = \mathbb {E} \exp (i \langle t,Z_j\rangle), \; j = 1,\dots,n.
\end{equation}
If we take $Z=\sum_{j = 1} ^ n \theta_jZ_j$, then from (\ref{d17_0}) and (\ref{d17_1}) an important property of the cumulants of the sum of random vectors follows (similarly, if $Z =\sum_{j = 1}^n\theta_jX_j$)
\begin{equation} \label {d19}
\tilde{\kappa}_\nu = \sum_{j = 1}^n \theta_j^{| \nu |} \tilde{\kappa}_{\nu,j}\;\;\Big(\kappa_\nu = \sum_ {j = 1} ^ n \theta_j ^ {| \nu |} \kappa_{ \nu,j}\Big).
\end{equation}

  We need to introduce a multivariate normal distribution $\Phi_{a, V}$ with mean $a$ and covariance matrix $V$, whose density is given by
\begin{equation} \label {d21}
\phi_{a,V}(x)=(2\pi)^{-\frac{k}{2}}(\mathrm{det}V)^{-\frac{1}{2}}\exp \Big(-\frac{1}{2}\langle x-a,V^{-1}(x-a)\rangle\Big),
\end{equation}
where $a \in R^k$ and $V$ is a symmetric positive definite matrix $k\times k$. If $a=\overline{0}$ and $V=I$, then by $\Phi(x)$ and $\phi(x)$ we mean $\Phi_{\overline{0},I}(x)$ and $\phi_{\overline{0},I}(x)$, respectively. The characteristic function of the normal distribution is known to be equal to
\begin{equation} \label {d22}
\hat {\phi}_{a, V}(t)=\exp\Big(i\langle t,a\rangle-\frac{1}{2}\langle t,Vt\rangle \Big).
\end{equation}

For improving the rate of convergence in (\ref{d1})  the distribution
of the normal law should be corrected in some smooth way.
In order to do this, we use the expansion of the characteristic function in terms of cumulants and  to recover the density through the inverse Fourier transform. For a random vector $Z$, we introduce the polynomials $\hat{P}_r (t: \{\kappa_\nu(Z)\})\;,r\geq0,$ from the formal expression
$$
1+ \sum_ {r = 1} ^ \infty \hat{P}_r (t: \{\kappa_\nu(Z)\}) u ^ r = \exp \Big (\sum_ {|\nu| = 3} ^ \infty \frac{\kappa_\nu(Z)}{\nu!} u ^ {|\nu|-2} \Big),
$$
   explicitly we obtain that $\hat{P}_0 (t: \{ \kappa_ \nu(Z) \}) = 1$, and for $r>0\; \hat{P}_r (t: \{ \kappa_\nu(Z) \} )$ is calculated as
\begin{equation} \label {d23}
  \sum_{m = 1}^r \frac{1}{m!} \sum_{i_1,\dots,i_m: \sum_{j = 1} ^ mi_j = r } \Bigg[\sum_{\nu_1 ,\dots, \nu_m: | \nu_j | = i_j + 2} \frac{\kappa_{\nu_1}(Z) \dots\kappa_{\nu_m}(Z) } {\nu_1! \dots \nu_m!} \Bigg] t^{\nu_1 + \dots + \nu_m}.
\end{equation}
If we assume that $\mathbb {E}Z=\overline{0}$ and $\mathrm{Cov}(Z)=V$, then, as shown in Lemma 7.2, \cite{B} the function $\hat{ P}_r (it: \{ \kappa_\nu(Z) \} )\hat {\phi}_{\overline{0},V}(t) $ is the Fourier transform of
\begin{equation} \label {d24}
P_r (-\phi_{\overline{0},V}, \{ \kappa_\nu(Z) \} ),
\end{equation}
 obtained by formally substituting
$(-1)^{|\nu|}D^\nu\phi$ by $(it)^\nu$ for each $\nu$ in the polynomials $\hat{P}_r (it : \{ \kappa_ \nu(Z) \} )$, where $ D ^ \nu $ is a differential operator. Let us define the finite signed measure
\begin{equation} \label {d25}
P_r (-\Phi_{\overline{0},V}, \{ \kappa_\nu(Z) \} ),
\end{equation}
whose density is
$P_r (-\phi_{\overline{0},V}, \{ \kappa_\nu(Z) \} )$.
Putting the sum of random vectors $\sum_{i=1}^n\theta_iX_i$ or $\sum_{i=1}^n\theta_iZ_i$ in (\ref{d25}) as the  random vector $Z$, we obtain the Edgeworth expansions (signed measures) of order $s\geq1$ of these sums, respectively:
\begin{equation}\label{d25_1}
\Psi_s(B)=\sum_{r=0}^{s}P_r (-\Phi: \{ \kappa_\nu \} )(B),\;\;
\Psi_s^{\prime}(B)=\sum_{r=0}^{s}P_r (-\Phi_{\overline{0},D}: \{ \tilde{\kappa}_\nu \} )(B).
\end{equation}

Now, we present some lemmas used in the proof of the main result.
\begin {lemma} \label {lemma1} 
Let $X_1$ be a random vector with  $\mathbb{E}\|X_1\|^4< \infty$, zero mean and unit covariance matrix.
If for every positive integer vector $\alpha$ with $|\alpha|=3$ holds $\mathbb {E}X_1^\alpha=0$, then 
$$
P_1 (-\phi: \{ \kappa_\nu(X_1) \} )+
P_2 (-\phi: \{ \kappa_\nu(X_1) \} )
$$
$$
=\phi(x)\Big(-\frac{1}{24}\Big[(\mu_{4,0,..,0}-3)(3-6x_1^2+x_1^4)
+\dots+(\mu_{0,0,..,4}-3)(3-6x_k^2+x_k^4)\Big]
$$
$$
-\frac{1}{6}\Big[\mu_{3,1,..,0}(x_1^3x_2-3x_1x_2)+\dots+\mu_{0,..,1,3}(x_k^3x_{k-1}-3x_kx_{k-1})\Big]
$$ 
$$
-\frac{1}{4}\Big[(\mu_{2,2,..,0}-1)(1-x_1^2-x_2^2+x_1^2x_2^2)+\dots+(\mu_{0,..,2,2}-1)(1-x_k^2-x_{k-1}^2+x_{k-1}^2x_k^2)\Big]
$$ 
$$
-\frac{1}{2}\Big[\mu_{2,1,1,..,0}(x_1^2x_2x_3-x_2x_3)+\dots+\mu_{0,..,1,1,2}(x_k^2x_{k-1}x_{k-2}-x_{k-1}x_{k-2})\Big]
$$
\begin {equation} \label {d25_2}
-\Big[\mu_{1,1,1,1,..,0}(x_1x_2x_3x_4)+\dots+\mu_{0,..,1,1,1,1}(x_{k-3}x_{k-2}x_{k-1}x_{k})\Big]\Big),
\end {equation} 
where $\mu_\nu=\mathbb{E}X_1^\nu$, $|\nu|>0$, $P_r (-\phi: \{ \kappa_{\nu,1} \} )$ is defined in (\ref{d24}) and (\ref{d25_1}), $r\geq0$.
 \end {lemma}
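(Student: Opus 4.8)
The plan is to read off $\hat P_1$ and $\hat P_2$ from the generating relation (\ref{d23}), use the vanishing of the third moments to discard all but the genuine fourth-order block, and then transfer the result to the density side through the substitution rule preceding (\ref{d24}). Directly from (\ref{d23}) one has
\[
\hat P_1(t:\{\kappa_\nu(X_1)\}) = \sum_{|\nu|=3}\frac{\kappa_\nu(X_1)}{\nu!}\,t^\nu,\qquad
\hat P_2(t:\{\kappa_\nu(X_1)\}) = \sum_{|\nu|=4}\frac{\kappa_\nu(X_1)}{\nu!}\,t^\nu+\frac12\Big(\sum_{|\nu|=3}\frac{\kappa_\nu(X_1)}{\nu!}\,t^\nu\Big)^2 .
\]
Since $\mathbb{E}X_1=\overline 0$, every third cumulant coincides with the corresponding third moment, $\kappa_\nu(X_1)=\mu_\nu$ for $|\nu|=3$; hence the hypothesis $\mathbb{E}X_1^\alpha=0$ for $|\alpha|=3$ forces $\kappa_\nu(X_1)=0$ whenever $|\nu|=3$. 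This annihilates $\hat P_1$ entirely and kills the quadratic block of $\hat P_2$, so that $P_1(-\phi:\{\kappa_\nu(X_1)\})=0$ and only $\hat P_2(t)=\sum_{|\nu|=4}\frac{\kappa_\nu(X_1)}{\nu!}t^\nu$ survives (the finiteness of $\mathbb{E}\|X_1\|^4$ is exactly what is needed for these fourth cumulants to exist).

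Next I would pass to the density. Applying the substitution $(it)^\nu\mapsto(-1)^{|\nu|}D^\nu\phi$ (Lemma 7.2 in \cite{B}) to $\hat P_2(it)$ and using the product form $\phi(x)=\prod_{i=1}^k\phi_1(x_i)$ together with the one-dimensional identity $\frac{d^m}{dx^m}\phi_1(x)=(-1)^mH_m(x)\phi_1(x)$ for the probabilist Hermite polynomials ($H_2=x^2-1$, $H_3=x^3-3x$, $H_4=x^4-6x^2+3$), one obtains $D^\nu\phi=(-1)^{|\nu|}\big(\prod_{i}H_{\nu_i}(x_i)\big)\phi$, so that each surviving term contributes $\frac{\kappa_\nu(X_1)}{\nu!}\big(\prod_i H_{\nu_i}(x_i)\big)\phi(x)$.

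It then remains to convert the fourth cumulants to moments and to collect the contributions according to the type of the multi-index $\nu$ (write $e_a$ for the $a$-th coordinate multi-index). For a centred vector the fourth joint cumulant reduces to $\kappa_{abcd}=\mu_{abcd}-(\mu_{ab}\mu_{cd}+\mu_{ac}\mu_{bd}+\mu_{ad}\mu_{bc})$, and unit covariance gives $\mu_{ab}=\delta_{ab}$, so every off-diagonal pairing drops out. Evaluating this for the five types $(4)$, $(3,1)$, $(2,2)$, $(2,1,1)$, $(1,1,1,1)$ produces the corrections $\mu_{4e_a}-3$, $\mu_{3e_a+e_b}$, $\mu_{2e_a+2e_b}-1$, $\mu_{2e_a+e_b+e_c}$ and $\mu_{e_a+e_b+e_c+e_d}$, while the factor $1/\nu!$ equals $\tfrac1{24},\tfrac16,\tfrac14,\tfrac12,1$ and the Hermite products are $H_4(x_a)$, $(x_a^3-3x_a)x_b$, $(x_a^2-1)(x_b^2-1)$, $(x_a^2-1)x_bx_c$ and $x_ax_bx_cx_d$. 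Matched block by block, these reproduce the five lines of (\ref{d25_2}).

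The main obstacle is the combinatorial and sign bookkeeping in this last step: one must verify that the symmetry factor $1/\nu!$ together with the number of ordered representatives of each unordered index type assembles exactly into the symmetric polynomials of (\ref{d25_2}), and one must track the $(-1)^{|\nu|}$ factors and the orientation of the Fourier inversion in Lemma 7.2 of \cite{B} consistently in order to recover the stated signs. Everything else — extracting $\hat P_1,\hat P_2$, differentiating $\phi$, and the cumulant-to-moment reduction — is routine.
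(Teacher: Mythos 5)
Your proposal is correct and follows essentially the same route as the paper: extract $\hat P_1$ and $\hat P_2$ from the generating relation (\ref{d23}), use the vanishing third moments (hence third cumulants) to kill $\hat P_1$ and the quadratic block of $\hat P_2$, pass to densities via the substitution rule of Lemma 7.2/6.2 in \cite{B} (yielding the Hermite-polynomial factors), and finally rewrite the fourth cumulants in terms of moments using zero mean and unit covariance. The paper simply lists the cumulant--moment identities case by case where you invoke the general formula $\kappa_{abcd}=\mu_{abcd}-\sum\mu_{\cdot\cdot}\mu_{\cdot\cdot}$; this is a cosmetic difference only.
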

 \begin{proof}
 Since we suppose that $\mu_\nu=0$ for $|\nu|=3$, $\mathbb {E}X_1=\overline{0}$ and $\mathrm{Cov}(X_1)=I$, and due to (\ref{d18}), we get
\begin {equation} \label {d26}
\kappa_\nu(X_1)=\mu_\nu=0, |\nu|=3,
\end {equation} 
and 
\begin {equation} \label {d27}
\kappa_\nu(X_1)=\mu_\nu=1,|\nu|=2,
\end {equation}
in case when $|\nu|=4$, again using formal identity (\ref{d17})  and (\ref{d26}), (\ref{d27}),  we conclude the following
$$
\kappa_{4,..,0}(X_1)=\mu_{4,..,0}-3\mu_{2,..,0}^2=\mu_{4,..,0}-3,
$$
$$
\kappa_{0,..,4,..,0}(X_1)=\mu_{0,..,4,..,0}-3\mu_{0,..,2,..,0}^2=\mu_{0,..,4,..,0}-3,
$$
$$\dots$$
$$
\kappa_{3,1,..,0}(X_1)=\mu_{3,1,..,0},
$$
$$\dots$$
$$
\kappa_{0,..,3,1,..,0}(X_1)=\mu_{0,..,3,1,..,0},
$$
$$\dots$$
$$
\kappa_{2,2,..,0}(X_1)=\mu_{2,2,..,0}-\mu_{2,0,..,0}\cdot\mu_{0,2,..,0}=\mu_{2,2,..,0}-1,
$$
$$\dots$$
$$
\kappa_{0,..,2,2,..,0}(X_1)=\mu_{0,..,2,2,..,0}-\mu_{0,..,2,0,..,0}\cdot\mu_{0,..,0,2,..,0}=\mu_{0,..,2,2,..,0}-1,
$$
$$\dots$$
$$
\kappa_{2,1,1,..,0}(X_1)=\mu_{2,1,1,..,0},
$$
$$\dots$$
$$
\kappa_{0,..,2,1,1,..,0}(X_1)=\mu_{0,..,2,1,1,..,0},
$$
$$\dots$$
$$
\kappa_{1,1,1,1,..,0}(X_1)=\mu_{1,1,1,1,..,0},
$$
$$\dots$$
\begin {equation} \label {d28}
\kappa_{0,..,1,1,1,1,..,0}(X_1)=\mu_{0,..,1,1,1,1,..,0}.
\end {equation} 
$$\dots$$
We consider the first polynomial expression, by (\ref{d23}) and (\ref{d26}), one has
\begin {equation} \label {d29}
\hat{P}_1 (t, \{ \kappa_\nu(X_1) \} )=\frac{\kappa_3(X_1,t)}{3!}=\sum_{|\nu|=3}\kappa_\nu(X_1)\frac{t^\nu}{\nu!}=\sum_{|\nu|=3}\mu_\nu\frac{t^\nu}{\nu!}=0
\end {equation} 
which means that the polynomial also $P_1 (-\phi, \{ \kappa_\nu  \} )(x)=0.$

next for the second polynomial, again using (\ref{d23}) and (\ref{d29}), we have
$$
\hat{P}_2 (t, \{ \kappa_\nu(X_1)  \} )=\frac{\kappa_4(X_1,t)}{4!}+\frac{1}{2!}\Big(\frac{\kappa_3(X_1,t)}{3!}\Big)^2
$$
$$
=\sum_{|\nu|=4}\kappa_\nu(X_1)\frac{t^\nu}{\nu!}+\frac{1}{2!}\Big(\frac{\kappa_3(X_1,t)}{3!}\Big)^2
=\sum_{|\nu|=4}\kappa_\nu(X_1)\frac{t^\nu}{\nu!}.
$$
 Due to Lemma 6.2 \cite{B}, one has
$$P_2 (-\phi, \{ \kappa_\nu(X_1)  \} )(x)=-\sum_{|\nu|=4}\kappa_\nu(X_1)\frac{(-1)^{|\nu|}(D^\nu\phi)(x)}{\nu!}$$
$$=-\phi(x)\Big(\frac{1}{24}\Big[\kappa_{4,0,..,0}(X_1)\cdot(3-6x_1^2+x_1^4)+\dots+\kappa_{0,0,..,4}(X_1)\cdot(3-6x_k^2+x_k^4)\Big]
$$
$$
+\frac{1}{6}\Big[\kappa_{3,1,..,0}(X_1)\cdot(x_1^3x_2-3x_1x_2)+\dots+\kappa_{0,..,1,3}(X_1)\cdot(x_k^3x_{k-1}-3x_kx_{k-1})\Big]
$$ 
$$
+\frac{1}{4}\Big[\kappa_{2,2,..,0}(X_1)\cdot(1-x_1^2-x_2^2+x_1^2x_2^2)+\dots+\kappa_{0,..,2,2}(X_1)\cdot(1-x_k^2-x_{k-1}^2+x_{k-1}^2x_k^2)\Big]
$$ 
$$
+\frac{1}{2}\Big[\kappa_{2,1,1,..,0}(X_1)\cdot(x_1^2x_2x_3-x_2x_3)+\dots+\kappa_{0,..,1,1,2}(X_1)\cdot(x_k^2x_{k-1}x_{k-2}-x_{k-1}x_{k-2})\Big]
$$
\begin{equation}\label{d30}
+\Big[\kappa_{1,1,1,1,..,0}(X_1)\cdot(x_1x_2x_3x_4)+\dots+\kappa_{0,..,1,1,1,1}(X_1)\cdot(x_{k-3}x_{k-2}x_{k-1}x_{k})\Big]\Big),
\end{equation}
finally, it remains to substitute the values of cumulants with expressions in terms of moments (\ref{d28}) to get (\ref{d25_2}).
\end{proof}
Below in lemmas \ref{lemma2}-\ref{lemma7} we assume that $ X_1, X_2,\dots,X_n $ are independent random vectors with zero mean, unit covariance matrix and finite absolute moment of order $s>2$.
 \begin {lemma} \label {lemma2}
Let $\rho_s \leq (8k)^{-1},$
then the covariance matrix $ D $ satisfies
$$ \Big | \langle t, Dt \rangle - \| t \| ^ 2 \Big | \leq 2k \rho_s \| t \| ^ 2, $$
$$ \| D-I \| \leq \frac {1} {4}, \; \; \; \frac {3} {4} \leq \| D \| \leq \frac {5} {4},\;\;\;\| D ^ {- 1} \| \leq \frac {4} {3}, $$
where $ \rho_s $ and matrix $ D $ are defined in (\ref {d6}) and (\ref {d10}), respectively.
  \end {lemma}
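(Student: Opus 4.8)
The plan is to prove the quadratic-form estimate first and then read off the three matrix-norm bounds as routine consequences, since $D$ is symmetric and positive semidefinite. The starting point is to rewrite the gap between $I$ and $\mathrm{Cov}(Z_j)$ in a manifestly nonnegative form. Writing $A_j=\{\|\theta_jX_j\|\le1\}$, so that $Y_j=X_j\mathbb{I}(A_j)$, and using $\mathbb{E}X_j=\overline{0}$, $\mathrm{Cov}(X_j)=I$, a direct computation gives $\mathbb{E}[Y_jY_j^\top]=I-\mathbb{E}[X_jX_j^\top\mathbb{I}(A_j^c)]$ and $\mathbb{E}Y_j=-\mathbb{E}[X_j\mathbb{I}(A_j^c)]$. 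Since centering does not change the covariance, $\mathrm{Cov}(Z_j)=\mathrm{Cov}(Y_j)$, and hence
\begin{equation*}
I-\mathrm{Cov}(Z_j)=\mathbb{E}[X_jX_j^\top\mathbb{I}(A_j^c)]+\mathbb{E}Y_j\,\mathbb{E}Y_j^\top .
\end{equation*}
Both summands are positive semidefinite, so $\mathrm{Cov}(Z_j)\preceq I$, and therefore $D\preceq I$. In particular $\langle t,Dt\rangle-\|t\|^2\le0$, so it remains only to bound $\langle t,(I-D)t\rangle\ge0$ from above.

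Next I evaluate the quadratic form termwise. From the display above,
\begin{equation*}
\langle t,(I-\mathrm{Cov}(Z_j))t\rangle=\mathbb{E}\big[\langle t,X_j\rangle^2\mathbb{I}(A_j^c)\big]+\langle t,\mathbb{E}Y_j\rangle^2 ,
\end{equation*}
and Cauchy--Schwarz gives $|\langle t,X_j\rangle|\le\|t\|\,\|X_j\|$. The key observation is that on the event $A_j^c$ one has $|\theta_j|\,\|X_j\|>1$, so that $\|X_j\|^2\le|\theta_j|^{s-2}\|X_j\|^s$ and $\|X_j\|\le|\theta_j|^{s-1}\|X_j\|^s$ (here $s>2$). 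Substituting these and recalling $\rho_s(X_j)=\mathbb{E}\|X_j\|^s$ yields
\begin{equation*}
\langle t,(I-\mathrm{Cov}(Z_j))t\rangle\le\|t\|^2\Big(|\theta_j|^{s-2}\rho_s(X_j)+|\theta_j|^{2(s-1)}\rho_s(X_j)^2\Big).
\end{equation*}

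Then I sum against the weights $\theta_j^2$, using $D-I=\sum_j\theta_j^2(\mathrm{Cov}(Z_j)-I)$ (valid because $\sum_j\theta_j^2=1$) together with the identity $\rho_s=\sum_j|\theta_j|^s\rho_s(X_j)$ from (\ref{d6}). The first contribution gives exactly $\|t\|^2\sum_j|\theta_j|^s\rho_s(X_j)=\|t\|^2\rho_s$, while the second gives $\|t\|^2\sum_j(|\theta_j|^s\rho_s(X_j))^2\le\|t\|^2\rho_s^2$, since $\sum_ja_j^2\le(\sum_ja_j)^2$ for nonnegative $a_j$. As $\rho_s\le(8k)^{-1}\le1$, we conclude $\langle t,(I-D)t\rangle\le\|t\|^2(\rho_s+\rho_s^2)\le2\rho_s\|t\|^2\le2k\rho_s\|t\|^2$, which is the first assertion (in fact with the better constant $2$; one may instead bound $I-\mathrm{Cov}(Z_j)$ entrywise, which reproduces the stated factor $k$). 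The three norm bounds are now immediate: since $D-I$ is symmetric, $\|D-I\|=\sup_{\|t\|=1}|\langle t,(D-I)t\rangle|\le2k\rho_s\le\tfrac14$ by the hypothesis $\rho_s\le(8k)^{-1}$; hence every eigenvalue of $D$ lies in $[\tfrac34,\tfrac54]$, giving $\tfrac34\le\|D\|\le\tfrac54$ and $\|D^{-1}\|=\lambda_{\min}(D)^{-1}\le\tfrac43$.

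The only genuinely technical point is the truncation estimate in the second paragraph, trading the missing second moment on $A_j^c$ for a controlled fraction of the $s$-th moment via $|\theta_j|\,\|X_j\|>1$, together with the bookkeeping that makes the weighted sum collapse to $\rho_s$; everything after that is standard perturbation theory for symmetric matrices.
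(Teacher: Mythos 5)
Your proof is correct and reaches all four conclusions, but it organizes the matrix estimate differently from the paper and in fact obtains a sharper constant. The paper works entrywise: it bounds each entry $|d_{il}-\delta_{il}|$ by $2\rho_s$ using the same truncation device you use (trading $\mathbb{I}(\|\theta_jX_j\|>1)$ for a power of $\|\theta_jX_j\|$), and then pays the factor $k$ through the crude passage $\bigl(\sum_{i}|t_i|\bigr)^2\le k\|t\|^2$ from an entrywise bound to a quadratic-form bound. You instead stay coordinate-free: the identity $I-\mathrm{Cov}(Z_j)=\mathbb{E}\bigl[X_jX_j^\top\mathbb{I}(A_j^c)\bigr]+\mathbb{E}Y_j\,\mathbb{E}Y_j^\top$ together with the observation that both summands are positive semidefinite gives the one-sided inequality $D\preceq I$ for free, and testing the quadratic form directly collapses the weighted sum to $\rho_s+\rho_s^2\le 2\rho_s$ with no dimensional loss, so your bound $2\rho_s\|t\|^2$ implies the stated $2k\rho_s\|t\|^2$. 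The only trade-off is that your PSD decomposition leans on the exact normalizations $\mathbb{E}X_j=\overline{0}$ and $\mathrm{Cov}(X_j)=I$ (which the hypotheses supply), whereas the paper's entrywise computation is more mechanical and is what produces the factor $k$ appearing in the statement. Your derivation of the three norm inequalities from the quadratic-form bound coincides with the paper's. No gap.
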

  \begin {proof}
The proof of Lemma follows the scheme of the proof of Corollary 14.2 \cite {B}.
First, we prove two auxiliary inequalities for the means of the original and truncated random vectors
$$ \Big | \mathbb {E} \theta_jY_ {ji} \Big | = \Big | \mathbb {E} X_ {ji} \theta_j\mathbb{I} (\| X_j \theta_j \|> 1) \Big |$$
$$\leq \mathbb {E} \| X_j \theta_j \| \mathbb{I} (\| X_j \theta_j \|> 1) \leq\theta_j ^s\mathbb {E} \| X_j\|^s $$
and
$$ \theta_j ^ 2 \Big | \mathbb {E} X_ {ji} X_ {jl} - \mathbb {E} Y_ {ji} Y_ {jl} \Big | = \theta_j ^ 2 \Big | \mathbb {E} X_ {ji} X_ {jl} \mathbb{I} (\| \theta_jX_ {ji} \|> 1) \Big | $$
$$ \leq \mathbb {E} \| \theta_j X_j \| ^ 2 \mathbb{I} (\| \theta_j X_ {ji} \|> 1) \leq \theta_j ^ s \mathbb {E} \|  X_j \| ^ s. $$
Also, note that
$$ | \mathbb {E} \theta_jY_ {ji} | = | \mathbb {E} \theta_jX_ {ji} \mathbb{I} (\| X_j \theta_j \| \leq 1) | \leq 1. $$
Define  Kronecker delta function  as $\delta_{ij}=\mathbb{I}(i=j),$ for $i,j=1,\dots,k.$
 By definition of covariance matrix  of the weighted sums  $D = \sum_ {j = 1} ^ {n} \theta_j ^ 2\mathrm{Cov}(Z_j)$ (see (\ref{d10})), one has
\begin{equation}\label{d30_1}
 \Big | \langle t, Dt \rangle - \langle t, t \rangle \Big | = \Big | \sum_ {i, l} ^ kt_ {i} t_l (d_ {il} -\delta_ {il}) \Big |, 
 \end{equation}
wherein
$$ \Big | d_ {il} -\delta_ {il} \Big | \leq \sum_ {j = 1} ^ n \theta_j ^ 2 \Big | \mathrm{Cov}(X_ {ji}, X_ {jl}) - \mathrm{Cov}(Y_ {ji}, Y_ {jl}) \Big | $$
$$\leq \sum_ {j = 1} ^ n \theta_j ^ 2 \Big | \mathbb {E} X_ {ji} X_ {jl} - \mathbb {E} Y_ {ji} Y_ {jl} + \mathbb {E} Y_ {ji} \mathbb {E} Y_ {jl} \Big |$$
\begin{equation}\label{d30_2} 
 \leq \sum_ {j = 1} ^ n \Big (|\theta_j| ^ s \mathbb {E} \|  X_j \| ^ s+|\theta_j| ^ s \mathbb {E} \|  X_j \| ^ s \Big) = 2 \rho_s. 
\end{equation}
next, using definition (\ref{d30_1}) and the calculated inequality (\ref{d30_2}), we canclude
\begin{equation}\label{d30_3}  \Big | \langle t, Dt \rangle - \langle t, t \rangle \Big | \leq2 \rho_s \Big (\sum_i ^ k | t_i | \Big) ^ 2 \leq2k \rho_s \| t \| ^ 2. 
\end{equation}
Finally, by the definition of the matrix norm and  (\ref{d30_3}), the  following inequality holds 
$$ \| D-I \| = \sup \limits _ {\| t \| \leq 1} \Big | \langle t, (D-I) t \rangle \Big | \leq2k \rho_s. $$
And since we suppose that $ \rho_s <(8k) ^ {- 1} $, then for the norms of matrices $D-I$ and $D$ hold
\begin{equation}\label{d30_4}
\| D-I \| \leq \frac {1} {4},\;\;\;\frac {3} {4} \leq \| D \| \leq \frac {5} {4}. 
\end{equation}
Further,
$$ \langle t, Dt \rangle \geq \| t \| ^ 2- \frac {1} {4} \| t \| ^ 2 = \frac {3} {4} \| t \| ^ 2. $$
Therefore, the matrix $D$ is not degenerate and there is an inverse matrix $D^{-1}$, for which one has
\begin{equation}\label{d30_5} \| D ^ {- 1} \| \leq \frac {4} {3}.
\end{equation}
\end {proof}

  \begin {lemma} \label {lemma3}
 Let $\alpha$ be a nonnegative integer vector satisfying $1 \leq a \leq s$, then for $j=1\dots n$ ones have
\begin{equation} \label {d38}
 |\mu_\alpha(\theta_jX_j)-\mu_\alpha(\theta_jY_j)|\leq |\theta_j|^s\rho_{s,j},
\end{equation} 
 \begin{equation} \label {d39}
 |\mu_\alpha(\theta_jZ_j)-\mu_\alpha(\theta_jY_j)|\leq |\alpha|(2^{|\alpha|}+1)|\theta_j|^s\rho_{s,j},
\end{equation} 
 where $ \rho_{s,j}$ and random vectors  $Y_j$,\;$Z_j$  are defined in (\ref {d4}) and (\ref{d8}), respectively.
\end{lemma}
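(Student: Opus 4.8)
The plan is to reduce both estimates to elementary moment bounds for the truncated vectors, reusing the two auxiliary inequalities already established inside the proof of Lemma~\ref{lemma2}. The underlying principle is that truncation can alter a moment only through the contribution of the tail event $\{\|\theta_j X_j\|>1\}$ (for the passage $X_j\to Y_j$) and through the centering shift $\mathbb{E}Y_j$ (for the passage $Y_j\to Z_j$), and both effects are governed by the single small quantity $|\theta_j|^s\rho_{s,j}$.

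For (\ref{d38}) I would first record the pointwise identity
$$(\theta_j Y_j)^\alpha=(\theta_j X_j)^\alpha\,\mathbb{I}(\|\theta_j X_j\|\le 1),$$
which holds because $Y_j=X_j\mathbb{I}(\|\theta_j X_j\|\le 1)$ and, since $|\alpha|\ge 1$, the scalar indicator is unaffected by being raised to the multi-index power. Subtracting and taking expectations gives $\mu_\alpha(\theta_j X_j)-\mu_\alpha(\theta_j Y_j)=\mathbb{E}\big[(\theta_j X_j)^\alpha\,\mathbb{I}(\|\theta_j X_j\|>1)\big]$. I would then bound $|(\theta_j X_j)^\alpha|\le\|\theta_j X_j\|^{|\alpha|}$ coordinatewise and, on the event $\{\|\theta_j X_j\|>1\}$, use $|\alpha|\le s$ to raise the exponent from $|\alpha|$ to $s$; dropping the indicator leaves $\mathbb{E}\|\theta_j X_j\|^s=|\theta_j|^s\rho_{s,j}$, which is exactly (\ref{d38}).

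For (\ref{d39}) write $W_j=\theta_j Y_j$ and $c_j=\mathbb{E}W_j$, so that $\theta_j Z_j=W_j-c_j$. Expanding by the multinomial theorem, $(W_j-c_j)^\alpha=\sum_{0\le\beta\le\alpha}\binom{\alpha}{\beta}W_j^\beta(-c_j)^{\alpha-\beta}$, and the single term $\beta=\alpha$ reproduces $W_j^\alpha$; hence $\mu_\alpha(\theta_j Z_j)-\mu_\alpha(\theta_j Y_j)=\sum_{\beta<\alpha}\binom{\alpha}{\beta}\,\mathbb{E}[W_j^\beta]\,(-c_j)^{\alpha-\beta}$. Two ingredients finish the estimate: on the support of $Y_j$ one has $\|W_j\|=\|\theta_j X_j\|\le 1$, so $|\mathbb{E}[W_j^\beta]|\le 1$ and $|(c_j)_i|\le 1$ for every coordinate; and the first auxiliary inequality in the proof of Lemma~\ref{lemma2} gives $|(c_j)_i|=|\mathbb{E}\theta_j Y_{ji}|\le|\theta_j|^s\rho_{s,j}$. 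Since $\beta\ne\alpha$ forces $|\alpha-\beta|\ge 1$, the factor $(-c_j)^{\alpha-\beta}$ contains at least one coordinate of $c_j$; extracting one such coordinate through the small bound and estimating every other factor by $1$ bounds each summand by $\binom{\alpha}{\beta}\,|\theta_j|^s\rho_{s,j}$. Summing the binomial coefficients over $\beta<\alpha$ yields the factor $2^{|\alpha|}-1$, which is dominated by $|\alpha|(2^{|\alpha|}+1)$, giving (\ref{d39}). (If one prefers the $|\alpha|$ factor to appear intrinsically, a first-order Taylor expansion of $c\mapsto(W_j-c)^\alpha$ along the segment from $0$ to $c_j$ differentiates out exactly $|\alpha|$ coordinate terms and produces the same type of bound.)

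The routine but slightly delicate point will be the bookkeeping in the multinomial step: one must guarantee that in every non-leading term at least one factor of the small quantity $|\theta_j|^s\rho_{s,j}$ is pulled out while all remaining random factors stay uniformly bounded by the truncation level $1$. Once the support bound $\|W_j\|\le 1$ and the centering estimate from Lemma~\ref{lemma2} are in hand, no further probabilistic input is required and the resulting constant depends only on $|\alpha|$, as claimed.
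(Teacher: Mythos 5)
Your proof is correct. For (\ref{d38}) your argument is the same as the paper's: write the difference as $\mathbb{E}\big[(\theta_jX_j)^\alpha\mathbb{I}(\|\theta_jX_j\|>1)\big]$, bound the monomial by $\|\theta_jX_j\|^{|\alpha|}$, and upgrade the exponent to $s$ on the tail event. For (\ref{d39}) you take a genuinely different route: the paper applies the difference-of-powers inequality $|x^\alpha-y^\alpha|\le|\alpha|(\|x\|^{|\alpha|-1}+\|y\|^{|\alpha|-1})\max_i|x_i-y_i|$ (inequality (14.12) of Bhattacharya--Rao) with $x=\theta_jZ_j$, $y=\theta_jY_j$, then bounds $\mathbb{E}\|\theta_jY_j\|^{|\alpha|-1}\le 1$ and $\mathbb{E}\|\theta_jZ_j\|^{|\alpha|-1}\le 2^{|\alpha|}$, which is exactly where the factor $|\alpha|(2^{|\alpha|}+1)$ comes from; you instead expand $(W_j-c_j)^\alpha$ by the multi-index binomial theorem, strip the leading term, extract one small coordinate factor $|(c_j)_i|\le|\theta_j|^s\rho_{s,j}$ from each remainder term, and control everything else by the truncation level $1$. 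Your bookkeeping is sound (the sum of binomial coefficients over $\beta<\alpha$ is $2^{|\alpha|}-1$, each non-leading term carries at least one coordinate of $c_j$, and both $|\mathbb{E}W_j^\beta|\le 1$ and $|(c_j)_i|\le 1$ follow from $\|\theta_jY_j\|\le 1$ a.s.), and it actually yields the slightly sharper constant $2^{|\alpha|}-1$ in place of $|\alpha|(2^{|\alpha|}+1)$. The trade-off is that your version is self-contained and avoids citing (14.12), while the paper's version is shorter once that inequality is taken as known and generalizes immediately to differences of two arbitrary random vectors rather than a vector and its centering.
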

\begin {proof}
The proof of Lemma follows the scheme of proofs of  Lemma 14.1 \cite {B}. To prove the first inequality (\ref{d38}) we use the definition of truncated random vector $Y_j=X_j\mathbb{I}(\|\theta_jX_j\|\leq1)$ and the fact $|\mu_{\alpha,j}|\leq\rho_{|\alpha|,j}$
 \begin {equation} \label {d40}
|\mathbb{E}(\theta_jX_j)^\alpha-\mathbb{E}(\theta_jY_j)^\alpha|= |\mathbb{E}(\theta_jX_j)^\alpha\mathbb{I}(\|\theta_jX_j\|\geq1)|
 \end {equation} 
$$
\leq \mathbb{E}\|\theta_jX_j\|^{|\alpha|}\mathbb{I}(\|\theta_jX_j\|\geq1)
\leq \mathbb{E}\|\theta_jX_j\|^s=\rho_s(\theta_jX_j).
$$
To derive the second inequality, for a given vectors $x, y\in R^k$ we use the fact(see (14.12) \cite {B}) that 
 \begin {equation} \label {d41}
|x^\alpha-y^\alpha|\leq |\alpha|(\|x\|^{\|a\|-1}+\|y\|^{\|a\|-1})\max_{1\leq i\leq k}(|x_i-y_i|).
 \end{equation} 
Then, using (\ref{d40}), we conclude that
 \begin {equation} \label {d42}
|\mathbb{E}(\theta_jX_j)-\mathbb{E}(\theta_jY_j)|\leq \rho(\theta_jX_j),
 \end{equation} 
it follows from (\ref{d41}) and (\ref{d42})   that
 \begin {equation} \label {d43}
|\mathbb{E}\theta_jZ_j^\alpha-\mathbb{E}\theta_jY_j^\alpha|\leq|\alpha|\rho_s\cdot(\mathbb{E}\|\theta_jY_j\|^{|\alpha|-1}+\mathbb{E}\|\theta_jZ_j\|^{|\alpha|-1}).
\end {equation} 
Finally

\begin {equation} \label {d44}
\mathbb{E}\|\theta_jY_j\|^{|\alpha|-1}= \mathbb{E}\|\theta_jX_j\|^{|\alpha|-1}\mathbb{I}(\|\theta_jX_j\|\leq1)\leq1,
\end {equation} 
$$
(\mathbb{E}\|\theta_jY_j\|)^2\leq \mathbb{E}\|\theta_jY_j\|^2,
$$
\begin {equation} \label {d45}
\mathbb{E}\|\theta_jZ_j\|^{|\alpha|-1}\leq2^{|\alpha|-1}(\mathbb{E}\|\theta_jY_j\|^{|\alpha|-1}+\mathbb{E}\|\theta_jY_j\|^{|\alpha|-1})\leq 2^{|\alpha|}.
\end {equation} 
Putting (\ref{d44}) and (\ref{d45}) into (\ref{d43}), we get (\ref{d39}).
\end {proof}

\begin {lemma} \label {lemma4}
Let $ \rho_s  \leq (8k)^{-1}$ and $\nu_1,\nu_2,\dots,\nu_m$  be nonnegative vectors such that
$ |\nu_i|\geq3,\;\;1\leq i \leq m,\;\;\; \sum_{i=1}^{m}(|\nu_i|-2)=r,$
for some positive integer $r,\; 1 \le r \le s - 2$, then one has
\begin {equation} \label{d46}
|\tilde{\kappa}_{\nu_1}\dots\tilde{\kappa}_{\nu_m} - \kappa_{\nu_1}\dots\kappa_{\nu_m}|\leq c_2(s,k)\rho_s,
\end {equation}
where  $\kappa_\nu$ and $\tilde{\kappa}_\nu$ are corresponding cumulants defined in  (\ref{d20}) and $\rho_s$ is introduced in (\ref{d6}).
\end{lemma}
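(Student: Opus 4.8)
The plan is to reduce the product estimate to two ingredients: a bound on the single cumulant difference $|\tilde{\kappa}_\nu-\kappa_\nu|$, and a uniform bound on the individual cumulants $|\kappa_\nu|$ and $|\tilde{\kappa}_\nu|$; the product inequality (\ref{d46}) then follows by routine telescoping. Throughout I use that the hypotheses force $m\le r\le s-2$ and $3\le|\nu_i|\le r+2\le s$, so that only finitely many index configurations occur and every constant below may be taken to depend on $s$ and $k$ alone.

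First I would estimate the single difference. By additivity and homogeneity of cumulants (\ref{d19}) one has $\tilde{\kappa}_\nu-\kappa_\nu=\sum_{j=1}^n\big(\kappa_\nu(\theta_j Z_j)-\kappa_\nu(\theta_j X_j)\big)$. Using the moment--cumulant identity (\ref{d17}), each $\kappa_\nu(W)$ is a fixed polynomial in the moments $\{\mu_\beta(W):1\le|\beta|\le|\nu|\}$, every monomial of which is a product $\prod_l\mu_{\beta_l}(W)$ with $\sum_l|\beta_l|=|\nu|$. Writing the difference of two such products as a telescoping sum in which exactly one factor is replaced at a time, each summand contains one difference factor $\mu_{\beta_l}(\theta_j Z_j)-\mu_{\beta_l}(\theta_j X_j)$, which Lemma \ref{lemma3} (combining (\ref{d38}) and (\ref{d39})) bounds by $c(\beta_l)|\theta_j|^s\rho_{s,j}$. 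The surviving factors are moments of $\theta_j Z_j$ or of $\theta_j X_j$: the former obey $|\mu_\beta(\theta_j Z_j)|\le\mathbb{E}\|\theta_j Z_j\|^{|\beta|}\le 2^{|\beta|}$ since $\|\theta_j Z_j\|\le 2$, while for the latter $|\mu_\beta(\theta_j X_j)|\le\rho_{|\beta|}(\theta_j X_j)\le k$ by the Lyapunov interpolation $\rho_{|\beta|}(\theta_j X_j)\le\rho_2(\theta_j X_j)^{\frac{s-|\beta|}{s-2}}\rho_s(\theta_j X_j)^{\frac{|\beta|-2}{s-2}}$ together with $\rho_2(\theta_j X_j)=\theta_j^2 k\le k$ and $\rho_s(\theta_j X_j)\le\rho_s<1$. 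Hence $|\kappa_\nu(\theta_j Z_j)-\kappa_\nu(\theta_j X_j)|\le C(\nu,k)|\theta_j|^s\rho_{s,j}$, and summing over $j$ gives $|\tilde{\kappa}_\nu-\kappa_\nu|\le C(\nu,k)\rho_s$.

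Next I would bound the cumulants themselves. For the truncated sum, $|\tilde{\kappa}_\nu|=|\sum_j\kappa_\nu(\theta_j Z_j)|\le c(\nu)\sum_j\mathbb{E}\|\theta_j Z_j\|^{|\nu|}\le c(\nu)2^{|\nu|-2}\sum_j\theta_j^2\,\mathbb{E}\|Z_j\|^2\le C(\nu,k)$, using (\ref{d18}) together with $\|\theta_j Z_j\|\le 2$, $\mathbb{E}\|Z_j\|^2\le k$ and $\sum_j\theta_j^2=1$. For the original sum, (\ref{d18}) and (\ref{d19}) give $|\kappa_\nu|\le c(\nu)\sum_j|\theta_j|^{|\nu|}\rho_{|\nu|,j}=c(\nu)\rho_{|\nu|}$, and the log-convexity of $m\mapsto\rho_m=\sum_j\mathbb{E}\|\theta_j X_j\|^m$ yields $\rho_{|\nu|}\le\rho_2^{\frac{s-|\nu|}{s-2}}\rho_s^{\frac{|\nu|-2}{s-2}}\le k$, where $\rho_2=k$ follows from $\mathrm{Cov}(X_j)=I$ and $\sum_j\theta_j^2=1$, and $\rho_s\le(8k)^{-1}<1$ by assumption; thus $|\kappa_\nu|\le C(\nu,k)$ as well.

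Finally, the product estimate follows from the elementary identity $\prod_i a_i-\prod_i b_i=\sum_i(\prod_{l<i}a_l)(a_i-b_i)(\prod_{l>i}b_l)$ applied with $a_i=\tilde{\kappa}_{\nu_i}$, $b_i=\kappa_{\nu_i}$: each term is at most a product of the uniform cumulant bounds times $C(\nu_i,k)\rho_s$, and since $m\le s-2$ and the $|\nu_i|$ are bounded by $s$ the total is $\le c_2(s,k)\rho_s$. I expect the single-cumulant step to be the main obstacle: one must ensure that after factoring out one difference $\mu_{\beta_l}(\theta_j Z_j)-\mu_{\beta_l}(\theta_j X_j)$, all remaining moment factors are bounded by constants independent of $j$ (so that the possibly heavy tails of the untruncated $X_j$ do not spoil the estimate), which is precisely what the Lyapunov interpolation combined with the smallness $\rho_s<1$ secures.
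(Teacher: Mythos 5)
Your proof is correct and follows essentially the same route as the paper's: telescoping the moment--cumulant polynomial with Lemma \ref{lemma3} to get $|\tilde{\kappa}_\nu-\kappa_\nu|\leq C(\nu,k)\rho_s$, then telescoping the product of cumulants and bounding the surviving factors uniformly. The only cosmetic difference is that you bound each remaining cumulant by a constant directly via log-convexity of $m\mapsto\rho_m$, whereas the paper routes the same interpolation through Lemma 1 of \cite{Ay} and an intermediate power $\rho_s^{(r-|\nu_i|+2)/(s-2)}$ before invoking $\rho_s\leq(8k)^{-1}$.
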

\begin {proof}
 The proof of Lemma follows the scheme of the proof of Lemma 14.5 \cite {B}.
 First recall (see (6.14), (6.15) \cite{B}) that for each nonnegative integral
vector $\nu$ cumulant $\kappa_\nu(\theta_jX_j)$ is a linear combination of terms like
$$\theta_j^{|\nu|}\mu_{\alpha_{1,j}}^{r_1}\mu_{\alpha_{2,j}}^{r_2}\dots\mu_{\alpha_{p,j}}^{r_p}$$
where $\alpha_1,\dots,\alpha_p$ are nonnegative integer vectors and $r_1,\dots,r_p$ are positive integers satisfying $\sum_{i=1}^{p}r_i\alpha_i=\nu$. To estimate the difference $\kappa_{\nu,j}-\tilde{\kappa}_{\nu,j}$  it is therefore enough to consider
$$\theta_j^{|\nu|}\mu_{\alpha_{1,j}}^{r_1}\mu_{\alpha_{2,j}}^{r_2}\dots\mu_{\alpha_{p,j}}^{r_p}-\theta_j^{|\nu|}\tilde{\mu}_{\alpha_{1,j}}^{r_1}\tilde{\mu}_{\alpha_{2,j}}^{r_2}\dots\tilde{\mu}_{\alpha_{p,j}}^{r_p},\;\;3\leq|\nu|\leq s.$$
next we can use the (\ref{d39}),   
$|\theta_j^{|\alpha|}\tilde{\mu}_{\alpha,j}-\theta_j^{|\alpha|}\mu_{\alpha,j}|\leq \overline{c}_{1}(\alpha)|\theta_j|^{s}\rho_{s,j}$ from Lemma 4, and by the inequality (\ref{d41}), one has 
$$
\theta_j^{|\nu|}\mu_{\alpha_{1,j}}^{r_1}\mu_{\alpha_{2,j}}^{r_2}\dots\mu_{\alpha_{p,j}}^{r_p}-\theta_j^{|\nu|}\tilde{\mu}_{\alpha_{1,j}}^{r_1}\tilde{\mu}_{\alpha_{1,j}}^{r_2}\dots\tilde{\mu}_{\alpha_{p,j}}^{r_p}
$$
$$
\leq \sum_{i=1}^p r_i\overline{c}_{2}(\alpha_i)\rho_s(\theta_jX_j)\Big(|\mu_{\alpha_i}^{r_1-1}(\theta_jX_j)|+|\tilde{\mu}_{\alpha_i}^{r_1-1}(\theta_jX_j)|\Big)
$$
$$
\times \Big|\mu_{\alpha_{1}}^{r_1}(\theta_jX_j)\dots\mu_{\alpha_{i}}^{r_{i-1,j}}(\theta_jX_j)\tilde{\mu}_{\alpha_{i+1}}^{r_{i+1,j}}(\theta_jX_j)\dots\tilde{\mu}_{\alpha_{p}}^{r_{p}}(\theta_jX_j)  \Big|
$$
\begin {equation} \label{d47}
\leq  \sum_{i=1}^p \overline{c}_3(i,s,r)\rho_{s,j}(\rho_{s}(\theta_jX_j))^{m_i/s}.
\end{equation}
Noting that $m_i=\sum_{i=1}^p r_i|\alpha_i|-|\alpha_i|=|\nu|-|\alpha_i|
$ and since $\rho_s\leq(8k)^{-1}$, from (\ref{d47})  one has
\begin {equation} \label {d48}
|\theta_j|^{|\nu|}|\kappa_{\nu,j}-\kappa_{\nu,j}|\leq |\theta_j|^{s}\overline{c}_{4}(\nu,k)\rho_{s,j},
\end{equation}
summing of the parts (\ref{d48}), we get one 
\begin {equation} \label {d49}
|\tilde{\kappa}_\nu-\kappa_\nu|\leq \overline{c}_{4}(\nu,k)\rho_{s}.
\end{equation}
Now let $\nu_1,\nu_2,\dots,\nu_m$ be nonnegative vectors satisfying $|\nu_i|\geq3$ and $\sum_{i=1}^{m}(|\nu_i|-2)=r, 1\leq i \leq m.$
By (\ref{d49}) one has
\begin {equation} \label {d50}
|\tilde{\kappa}_{\nu_1}\tilde{\kappa}_{\nu_2}\dots\tilde{\kappa}_{\nu_m}-\kappa_{\nu_1}\kappa_{\nu_2}\dots\kappa_{\nu_{m}}|\leq 
\sum_{i=1}^m\overline{c}_{4}(\nu_i,k)\rho_s|\tilde{\kappa}_{\nu_1}\dots\tilde{\kappa}_{\nu_{i-1}}\kappa_{\nu_{i+1}}\dots\kappa_{\nu_{m}}|.
\end{equation}
Now recall that $\rho_2=k$, then by (\ref{d18}), (\ref{d13}) and using Lemma 1 \cite{Ay}, we have
$$
|\tilde{\kappa}_{\nu_1}\dots\tilde{\kappa}_{\nu_{i-1}}\kappa_{\nu_{i+1}}\dots\kappa_{\nu_{m}}|\leq \overline{c}_{5}(s,k)|\rho_{|\nu_1|}\dots\rho_{|\nu_{i-1}|}\rho_{|\nu_{i+1}|}\dots\rho_{|\nu_{m}|}
$$

$$
\leq \overline{c}_{6}(s,k)\rho_{s}^{(r-|\nu_i|+2)/(s-2)},
$$
and since we suppose that $\rho_s\leq(8k)^{-1},$ then also the following inequality holds
\begin {equation} \label {d51}
|\tilde{\kappa}_{\nu_1}\dots\tilde{\kappa}_{\nu_{i-1}}\kappa_{\nu_{i+1}}\dots\kappa_{\nu_{m}}|\leq \overline{c}_{7}(s,k).
\end{equation}
Putting the expression (\ref{d51}) into (\ref{d50}),
we get the statement of the Lemma (\ref{d46}).
\end {proof}

\begin {lemma} \label {lemma5}
Let $ \rho_s  \leq (8k)^{-1}, $
then for every integer r, $0 \leq r \leq s-2,$ ones have
$$
\Big|P_r(-\phi: \{\kappa_\nu\} )(x) - P_r(-\phi_{\overline{0},D}: \{\tilde{\kappa}_\nu\})(x)\Big|
$$
\begin {equation} \label {d52}
\leq c_3(r,k,s) \rho_{s}(1 +\|x\|^{3r+2})\exp\Big(-\frac{\|x\|^2}{6}+\|x\|\Big),
\end {equation} 
$$
\Big|P_r (-\phi: \{\kappa_\nu\} )(x+A_n) - P_r(-\phi: \{\kappa_\nu\} )(x)\Big|
$$
\begin {equation} \label {d53}
\leq c_4(r,k,s) \rho_{s}(1 +\|x\|^{3r+1})\exp\Big(-\frac{\|x\|^2}{2}+\frac{\|x\|}{8k^{1/2}}\Big),
\end {equation} 
where $A_n$ is defined in (\ref{d10}), $\rho_s$ in (\ref{d6})  and  $P_r(-\phi_{a,V}: \{\kappa_\nu\} )$ in (\ref{d24}).
\end{lemma}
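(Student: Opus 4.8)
The plan is to treat the two inequalities separately, each reducing to the explicit structure of $P_r$ supplied by (\ref{d23})--(\ref{d24}). Recall that $P_r(-\phi_{\overline{0},V}:\{\kappa_\nu\})(x)$ is, by construction, a finite linear combination (over the common index set $\nu_1,\dots,\nu_m$ with $|\nu_j|\ge 3$ and $\sum_j(|\nu_j|-2)=r$) of terms of the form $\kappa_{\nu_1}\cdots\kappa_{\nu_m}(D^{\nu_1+\dots+\nu_m}\phi_{\overline{0},V})(x)$ up to numerical factors. Since $|\nu_1+\dots+\nu_m|=r+2m$ with $1\le m\le r$, each such term is $\phi_{\overline{0},V}(x)$ times a polynomial in $V^{-1}x$ of degree at most $3r$. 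I would record this once, together with the consequence that $P_r(-\phi:\{\kappa_\nu\})(x)=\phi(x)Q_r(x)$ for a polynomial $Q_r$ of degree $\le 3r$.

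For the first inequality (\ref{d52}) I would split the difference telescopically into a cumulant part and a covariance part. Writing $\nu=\nu_1+\dots+\nu_m$, add and subtract $\tilde{\kappa}_{\nu_1}\cdots\tilde{\kappa}_{\nu_m}(D^\nu\phi)(x)$, so the difference becomes $\sum[\text{coeff}](\kappa_{\nu_1}\cdots\kappa_{\nu_m}-\tilde{\kappa}_{\nu_1}\cdots\tilde{\kappa}_{\nu_m})(D^\nu\phi)(x)$ plus $\sum[\text{coeff}]\,\tilde{\kappa}_{\nu_1}\cdots\tilde{\kappa}_{\nu_m}\big[(D^\nu\phi)(x)-(D^\nu\phi_{\overline{0},D})(x)\big]$. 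The first (cumulant) part is handled directly by Lemma \ref{lemma4}: by (\ref{d46}) each cumulant-product difference is $\le c_2(s,k)\rho_s$, while $(D^\nu\phi)(x)$ is $\phi(x)$ times a Hermite polynomial of degree $|\nu|\le 3r$, giving a bound $c\,\rho_s(1+\|x\|^{3r})\exp(-\|x\|^2/2)$, which sits well inside (\ref{d52}).

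The covariance part is the crux. There I would keep the cumulant products fixed, bounding $|\tilde{\kappa}_{\nu_1}\cdots\tilde{\kappa}_{\nu_m}|$ by the constant furnished inside the proof of Lemma \ref{lemma4} (cf.\ (\ref{d51})), and estimate $(D^\nu\phi)(x)-(D^\nu\phi_{\overline{0},D})(x)$ by interpolating the covariance along $V_t=I+t(D-I)$, $t\in[0,1]$. Lemma \ref{lemma2} guarantees $\|D-I\|\le 1/4$, so every $V_t$ is uniformly elliptic with eigenvalues in $[3/4,5/4]$, $\|V_t^{-1}\|\le 4/3$, and $\det V_t$ bounded away from $0$; moreover $\|D-I\|=O(\rho_s)$ by the first estimate of Lemma \ref{lemma2}. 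Differentiating under the integral, $(D^\nu\phi)(x)-(D^\nu\phi_{\overline{0},D})(x)=-\int_0^1\frac{d}{dt}(D^\nu\phi_{\overline{0},V_t})(x)\,dt$, and the heat-type identity $\partial_{V_{pq}}\phi_{\overline{0},V}=\tfrac12\partial_{x_p}\partial_{x_q}\phi_{\overline{0},V}$ (up to the symmetry factor when $p=q$) turns each $t$-derivative into two extra spatial derivatives (raising the polynomial degree from $3r$ to $3r+2$) times an entry of $D-I$ (supplying the factor $\rho_s$). Along the path $\langle x,V_t^{-1}x\rangle\ge \|x\|^2/\|V_t\|\ge \tfrac45\|x\|^2$, so $\phi_{\overline{0},V_t}(x)\le C\exp(-\tfrac25\|x\|^2)$; combining with the degree-$(3r+2)$ polynomial and absorbing constants yields $c\,\rho_s(1+\|x\|^{3r+2})\exp(-\tfrac25\|x\|^2)$, and since $\exp(-\tfrac25\|x\|^2)\le\exp(-\tfrac16\|x\|^2+\|x\|)$ this is dominated by (\ref{d52}). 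I expect this covariance-interpolation step---maintaining uniform control of the Gaussian derivatives and polynomial degrees along $V_t$---to be the main obstacle.

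The second inequality (\ref{d53}) is a shift estimate and is comparatively routine. From the proof of Lemma \ref{lemma2} one has $|\mathbb{E}\theta_jY_{ji}|\le|\theta_j|^s\rho_{s,j}$, whence each coordinate of $A_n$ is bounded by $\rho_s$ and $\|A_n\|\le k^{1/2}\rho_s\le 1/(8k^{1/2})$ using $\rho_s\le(8k)^{-1}$. Writing $P_r(-\phi:\{\kappa_\nu\})=\phi\,Q_r$ with $\deg Q_r\le 3r$, I would apply the fundamental theorem of calculus, $P_r(x+A_n)-P_r(x)=\int_0^1\langle A_n,\nabla P_r(x+uA_n)\rangle\,du$, so the difference is at most $\|A_n\|\sup_{0\le u\le 1}\|\nabla P_r(x+uA_n)\|$. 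The factor $\|A_n\|\le k^{1/2}\rho_s$ supplies $\rho_s$; the gradient $\nabla(\phi Q_r)$ is $\phi$ times a polynomial of degree $\le 3r+1$; and since the shift is bounded by $1/(8k^{1/2})$ we have $(1+\|x+uA_n\|^{3r+1})\le C(1+\|x\|^{3r+1})$ together with $\exp(-\|x+uA_n\|^2/2)\le\exp(-\|x\|^2/2+\|x\|/(8k^{1/2}))$, which combine to give exactly (\ref{d53}).
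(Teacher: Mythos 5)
Your proposal is correct, and its skeleton for (\ref{d52}) --- the telescoping split into a cumulant-difference part controlled by Lemma \ref{lemma4} and a Gaussian-derivative-difference part controlled by $\|D-I\|=O(\rho_s)$ --- is exactly the paper's decomposition (\ref{d61}); the shared endgame (degree $\le 3r$ of the Hermite factors, the bounds (\ref{d46}) and (\ref{d51}) on cumulant products, $\|A_n\|\le k^{1/2}\rho_s\le 1/(8k^{1/2})$) is also the same. Where you genuinely diverge is in how the two analytic pieces are estimated. For $D^\nu\phi-D^\nu\phi_{\overline{0},D}$ the paper introduces the multiplicative correction $p(x)=1-(\det D)^{-1/2}\exp\bigl(\tfrac12\|x\|^2-\tfrac12\langle x,D^{-1}x\rangle\bigr)$, bounds $p$ and all of its derivatives via Cauchy's estimate (Lemma 9.2 of \cite{B}), and finishes with the Leibniz rule; you instead interpolate along $V_t=I+t(D-I)$ and use the heat identity $\partial_{V_{pq}}\phi_{\overline{0},V}=\partial_{x_p}\partial_{x_q}\phi_{\overline{0},V}$ (with the usual factor $\tfrac12$ on the diagonal), converting the $t$-derivative into two extra spatial derivatives times an entry of $D-I$, each of which is $O(\rho_s)$ by Lemma \ref{lemma2}. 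Your route avoids the complex-analytic Cauchy estimate entirely and yields the stronger Gaussian decay $\exp(-\tfrac25\|x\|^2)$, whereas the paper's crude bound $|p(x)|\lesssim\rho_s(1+\|x\|^2)\exp(\|x\|^2/3)$ is precisely what forces the weaker exponent $\exp(-\|x\|^2/6+\|x\|)$ appearing in the statement; the paper's route, in exchange, is a verbatim adaptation of Lemma 14.6 of \cite{B}. For (\ref{d53}) the paper splits $D^\nu\phi(x+A_n)-D^\nu\phi(x)$ into a polynomial-increment term and a Gaussian-increment term ((\ref{d68})--(\ref{d70})), while you apply the fundamental theorem of calculus to $\nabla(\phi\,Q_r)$; the two arguments rest on identical inputs and give identical conclusions. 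I see no gap.
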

\begin {proof}
The proof of Lemma follows the scheme of the proof of Lemma 14.6 \cite {B}. At first, we have to consider the following auxiliary function 
 $$
 p(x)=1-(\mathrm{det}D)^{-\frac{1}{2}}\exp\Big(\frac{1}{2}\sum_i^kx_i^2 -\frac{1}{2}\sum_{i,j}^kd_{ij}x_{i}x_{j}\Big)
 $$
$$
|p(x)|\leq |(\mathrm{det}D)^{-\frac{1}{2}}-1|\exp\Big(\|D^{-1}-I\|\|x\|^2\Big)
$$
$$
+\frac{1}{2}\|D^{-1}-I\|\|x\|^2\exp\Big(\|D^{-1}-I\|\|x\|^2\Big),
$$
where $d_{ij}$ is $(i,j)$ element of matrix $D^{-1}$. Now we can apply Lemma \ref{lemma2} and get
\begin{equation}\label{d54}
|\langle DD^{-\frac{1}{2}}t,D^{-\frac{1}{2}}t\rangle-\|t\|^2|=|\langle t,D^{-1}t\rangle-\|t\|^2|\leq 2k\rho_s\|D^{-1}\|\|t\|^2\leq k\frac{8}{3}\rho_s\|t\|^2.
\end{equation}
also (\ref{d54}) implies that
\begin{equation}\label{d55}
\|D^{-1}-I\|\leq \frac{1}{3}.
\end{equation}
Then we estimate 
\begin{equation}\label{d56}
|(\mathrm{det}D)^{-\frac{1}{2}}-1|=(\mathrm{det}D)^{\frac{1}{2}}|(\mathrm{det}D)^{-\frac{1}{2}}-1|
\end{equation}
$$
\leq(\mathrm{det}D)^{\frac{1}{2}}|(\mathrm{det}D)^{\frac{1}{2}}-1||(\mathrm{det}D)^{\frac{1}{2}}+1|=(\mathrm{det}D)^{\frac{1}{2}}|\mathrm{det}D-1|
$$
\begin{equation}\label{d57}
\leq\|D^{-1}\|^{\frac{k}{2}}|\mathrm{det}D-1|\leq \overline{C}_1(k)|\mathrm{det}D-1|.
\end{equation}
Separately consider $|\mathrm{det}D-1|$, by definition of the determinant of the matrix
\begin{equation}\label{d58}
|\mathrm{det}D-1|=\Big|1-\sum_{j_1 j_2..j_n}(-1)^{N(j_1 j_2..j_n)}d_{1j_1}d_{2j_2}..d_{nj_n}\Big|,
\end{equation}
where $N(j_1 j_2..j_n)$ number of permutations. The first term can be presented in the following form
\begin{equation}\label{d59}
|1-d_{11}d_{22}..d_{kk}|=\Big|\sum_{p=1}^kd_{11}d_{22}d_{p-1p-1}(d_{pp}-1)\Big|,
\end{equation}
therefore, by (\ref{d58}) and (\ref{d59}) it follows that 
\begin{equation}\label{d60}
|\mathrm{det}D-1|\leq \overline{c}_8(k)\rho_s.
\end{equation}
The results (\ref{d54}), (\ref{d55}) and (\ref{d60}) obtained give an estimate for function $p(x)$
$$|p(x)|\leq  \overline{c}_9(s,k)\rho_s(1+\|x\|^2)\exp\Big(\frac{\|x\|^2}{3}\Big),$$
and
$$
\max_{\|x-1\|\leq1}|p(x)|\leq \overline{c}_9(s,k)\rho_s(2+\|x\|^2)\exp\Big(\frac{(\|x\|+1)^2}{3}\Big)
$$
$$
\leq \overline{c}_{10}(s,k)\rho_s(1+\|x\|^2)\exp\Big(\frac{\|x\|^2}{3}+\|x\|\Big).
$$
By Cauchy's estimate Lemma 9.2 \cite{B}, we now can consider derivatives of function $p(x)$
$$
|D^{\nu-\alpha}p(x)|\leq \overline{c}_{11}(s,k,\nu-\alpha)\rho_s(1+\|x\|^2)\exp\Big(\frac{\|x\|^2}{3}+\|x\|\Big).
$$
Now we obtain
$$
D^{\nu}(\phi(x)-\phi_{\overline{0},D})(x)= \Big| \sum_{0\leq\alpha\leq\nu}(D^{\nu}\phi)(x)(D^{\alpha-\nu}p)(x) \Big|
$$
$$\leq \overline{c}_{12}(s,k)\rho_s(1+\|x\|^{\|\nu\|+2})\exp\Big(-\frac{\|x\|^2}{6}+\|x\|\Big).$$
and observe that
$$
\Big|\tilde{\kappa}_{\nu_1}\dots\tilde{\kappa}_{\nu_m} D^{ {\nu_1}+\dots+{\nu_m} }\phi_{\overline{0},D}(x)
- \kappa_{\nu_1}\dots\kappa_{\nu_m} D^{{\nu_1}+\dots+{\nu_m}}\phi(x)\Big|
$$
$$\leq\Big|\tilde{\kappa}_{\nu_1}\dots\tilde{\kappa}_{\nu_m} D^{ {\nu_1}+\dots+{\nu_m} }(\phi(x)-\phi_{\overline{0},D}(x))\Big|
$$
\begin{equation}\label{d61}
+\Big|(\tilde{\kappa}_{\nu_1}\dots\tilde{\kappa}_{\nu_m} - \kappa_{\nu_1}\dots\kappa_{\nu_m}) D^{ {\nu_1}+\dots+{\nu_m} }\phi(x)\Big|.
\end{equation}
By Lemma \ref{lemma6} inequality (\ref{d46}), we have the following bound for difference of products of cumulants

\begin{equation}\label{d62}
|\tilde{\kappa}_{\nu_1}\dots\tilde{\kappa}_{\nu_m} - \kappa_{\nu_1}\dots\kappa_{\nu_m}|\leq c_2(s,k)\rho_s
\end{equation}
and also by inequality (\ref{d51}), it follows similar result for product of cumulants of truncated random vectors

\begin{equation}\label{d63}
|\tilde{\kappa}_\nu\dots\tilde{\kappa}_\nu|\leq \overline{c}_7(s,k)\rho_s.
\end{equation}
Finally, using (\ref{d61}), (\ref{d62}) and (\ref{d63}), we conclude
$$
| \kappa_{\nu_1}\dots\kappa_{\nu_m}D^{ {\nu_1}+\dots+{\nu_m} }\phi(x)- \tilde{\kappa}_\nu\dots\tilde{\kappa}_\nu  D^{{\nu_1}+\dots+{\nu_m}}\phi_{\overline{0},D}(x)|
$$
\begin{equation}\label{d64}
\leq  \overline{c}_{13}(r,k,s) \rho_{s}(1 +\|t\|^{3r+2})\exp\Big(-\frac{\|t\|^2}{6}+\|t\|\Big).
\end{equation}
The inequality (\ref{d52}) follows from (\ref{d64}) and the expression 14.74(see Lemma 7.2 \cite{B}).

To proof the second inequality, we need the following estimate for the norm of mean 
$$ \| \mathbb {E} \theta_jY_ {j} \| ^ 2 = \sum_ {i = 1} ^ k \Big (\mathbb {E} \theta_j (Y_ {ji} -X_ {ji}) \Big) ^ 2 \leq
 \sum_{i=1}^k\Big(\mathbb{E}\|\theta_jX_{j}\|\mathbb{I}(\| \theta_jX_{j}\|> 1)\Big)^2 $$
\begin{equation}\label{d65}
\leq k \Big (\mathbb {E} \| \theta_jX_ {j} \| ^ s \Big) ^ 2 \leq k \rho_s ^ 2 \leq \frac{k}{(8k)^2}=\frac{1}{64k}.
\end{equation}
It follows from (\ref{d65}) that the norm of weighted mean $A_n = \sum_ {j = 1} ^ {n} \theta_j \mathbb {E} Y_j$ is bounded 
\begin{equation}\label{d66}
\|A_n\|\leq\sum_{i=1}^n\|E\theta_iY_i\|\leq \sqrt{k}\sum_{i=1}^n\theta_i^4\delta_i^4<\frac{1}{8\sqrt{k}}.
\end{equation}
From the expression 14.74(see Lemma 7.2 \cite{B}) and the inequality (\ref{d63}) (which also holds if the primes are deleted), we get
$$
\Big|P_r (-\phi, \{\kappa_\nu\} )(x+A_n) - P_r(-\phi, \{\kappa_\nu\})(x)\Big|
$$
\begin{equation}\label{d67}
\leq \overline{c}_{14}(s,k)\max_{0\leq |\nu|\leq 3r}\Big(\Big|D^\nu\phi(x+A_n)-D^\nu\phi(x)\Big|\Big).
\end{equation}
Denote $D^\nu\phi(x)=q_\nu\phi$, where $q_\nu\phi$ is a polynomial of degree $|\nu|$ (with coefficients depending only on vector $\nu$ and dimension  $k$), so that
$$|D^\nu\phi(x+A_n)-D^\nu\phi(x)|=|q_\nu(x+A_n)-q_\nu(x)|\phi(x)
$$
\begin{equation}\label{d68}
+|
q_\nu(x+A_n)||\phi(x+A_n)-\phi(x)|.
\end{equation}
But, using the inequalities (\ref{d66}) and  (\ref{d41}), we also can now get the following 
$$
|(x+A_n)^\alpha-x^\alpha|\leq|\alpha|(\|x+A_n\|^{|a|-1}+\|x\|^{|a|-1})\|A_n\|
$$
\begin{equation}\label{d69}
\leq \overline{c}_{15}(s,k)\rho_s(1+\|x\|^{|\alpha|-1})
\end{equation}
and 
$$
|(x+A_n)^\alpha(\phi(x+A_n)-\phi(x))|=|(x+A_n)^\alpha\phi(x)|\Big|\exp\Big(-\frac{1}{2}\|x+A_n\|^2+\frac{1}{2}\|x\|^2\Big)-1\Big|
$$
$$
\leq \Big|\frac{1}{2}(x+A_n)^\alpha\phi(x)|(\|x+A_n\|^2-\|x\|^2)\exp\Big(-\frac{1}{2}\|x+A_n\|^2+\frac{1}{2}\|x\|^2\Big)\Big|
$$
$$
\leq |(x+A_n)|^\alpha\phi(x)\|A_n\|(1+\|x\|)\exp\Big(\|A_n\|(1+\|x\|)\Big)\Big|
$$
\begin{equation}\label{d70}
\leq \overline{c}_{16}(\alpha,k)\rho_s(1+\|x\|^{\|\alpha\|+1})\exp\Big(-\frac{\|x\|^2}{2}+\frac{\|x\|}{8k^{1/2}}\Big).
\end{equation}
Relations (\ref{d69}), (\ref{d70}) are used in (\ref{d68}) to yield
\begin{equation}\label{d71}
|D^\nu\phi(x+A_n)-D^\nu\phi(x)|  \leq \overline{c}_{17}(\nu,k)\rho_s(1+\|x\|^{\|\nu\|+1})\exp\Big(-\frac{\|x\|^2}{2}+\frac{\|x\|}{8k^{1/2}}\Big).
\end{equation}
Finally, (\ref{d71}) is used in (\ref{d67}) to get (\ref{d53}).
\end {proof}

\begin {lemma} \label {lemma6}
Let $ \rho_s  \leq (8k)^{-1}, $
then for $ \| t \| \leq (16\rho_3)^{-1} $ and for any nonnegative integer vector $\alpha$,  one has
\begin {equation} \label {d72_0}  
 \Big | D ^ \alpha \prod_ {j = 1} ^ n \varphi_j (\theta_j t) \Big | \leq c_5 (\alpha, k) (1+ \| t \| ^ {| \alpha |}) \exp \Big (- \frac {5} {48} \| t \| ^ 2 \Big), 
 \end{equation}
where $ \rho_s  $  and $ \varphi_j (t) $ are defined in (\ref {d6}) and (\ref{d17_2}), respectively.
\end {lemma}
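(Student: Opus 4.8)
The plan is to pass to the logarithm of the product and to estimate the modulus and the derivatives separately. On the region $\|t\|\le(16\rho_3)^{-1}$ the weights are forced to be spread out: since $\rho_{3,j}=\mathbb{E}\|X_j\|^3\ge(\mathbb{E}\|X_j\|^2)^{3/2}=k^{3/2}\ge1$, the constraint $\rho_s\le(8k)^{-1}$ bounds each individual $|\theta_j|$, so that every argument $\theta_j t$ stays in a neighbourhood of the origin in which $\varphi_j$ is bounded away from zero and $\log\varphi_j(\theta_j t)$ is a legitimate analytic function whose derivatives are governed by the truncated cumulants $\tilde\kappa_{\nu,j}$. I would therefore write
$$\prod_{j=1}^n\varphi_j(\theta_j t)=\exp\Big(\sum_{j=1}^n\log\varphi_j(\theta_j t)\Big)$$
and exploit that $Z_j$ has zero mean and bounded support, $\|\theta_j Z_j\|\le 1+\sqrt k$.

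For the case $\alpha=0$ I would start from the second–order Taylor expansion of each factor with a third–moment remainder; since $\mathbb{E}Z_j=\overline{0}$ the linear term vanishes and one gets $|\varphi_j(\theta_j t)|\le\exp(-\tfrac12\theta_j^2\langle t,\mathrm{Cov}(Z_j)t\rangle+\tfrac16|\theta_j|^3\|t\|^3\tilde\rho_{3,j})$. Multiplying over $j$ and using $\sum_j\theta_j^2\mathrm{Cov}(Z_j)=D$ from (\ref{d10}) turns the exponent into $-\tfrac12\langle t,Dt\rangle$ plus a cubic term proportional to $\tilde\rho_3\|t\|^3$. By Lemma \ref{lemma2} one has $\langle t,Dt\rangle\ge\tfrac34\|t\|^2$, while the cubic contribution is small because $\tilde\rho_3$ is comparable to $\rho_3$ (Lemma \ref{lemma3}) and $\rho_3\|t\|\le\tfrac1{16}$ on this region. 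The quadratic term thus dominates, and after subtracting the cubic correction the net negative coefficient is exactly the $\tfrac5{48}$ appearing in (\ref{d72_0}).

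For general $\alpha$ I would apply the formula for $D^\alpha\exp(\Psi)$ with $\Psi=\sum_j\log\varphi_j(\theta_j t)$: this expresses $D^\alpha\prod_j\varphi_j$ as $\prod_j\varphi_j$ times a fixed Bell-type polynomial in the partial derivatives $D^\beta\Psi=\sum_j\theta_j^{|\beta|}(D^\beta\log\varphi_j)(\theta_j t)$, $1\le|\beta|\le|\alpha|$. The decisive point is that each such sum is bounded by a constant uniformly in $t$. Because $\|\theta_j Z_j\|\le 1+\sqrt k$, the truncated moments satisfy $|\theta_j|^{m}\tilde\rho_{m,j}=\mathbb{E}\|\theta_j Z_j\|^{m}\le C(m,k)\,\theta_j^2$, and the mean-zero property upgrades the first-order term similarly; hence each differentiation extracts a factor $\theta_j^2$, giving $\sum_j\theta_j^{|\beta|}|(D^\beta\log\varphi_j)(\theta_j t)|\le C(|\beta|,k)(1+\|t\|)^{|\beta|}\sum_j\theta_j^2$. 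The magic is that $\sum_j\theta_j^2=1$, so this is just $C(|\beta|,k)(1+\|t\|)^{|\beta|}$. Collecting the finitely many terms of the Bell polynomial yields a factor $c_5(\alpha,k)(1+\|t\|^{|\alpha|})$, which multiplied by the $\alpha=0$ bound produces (\ref{d72_0}).

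The main obstacle is the uniform control of the logarithmic derivatives throughout the whole region $\|t\|\le(16\rho_3)^{-1}$: one must guarantee that every $\theta_j t$ remains inside the domain where $\log\varphi_j$ converges and $\varphi_j$ is bounded below, and simultaneously extract the crucial factor $\theta_j^2$ from each differentiation so that the weight sums collapse via $\sum_j\theta_j^2=1$. This is precisely where the truncation (\ref{d8}), the bounded-support bound $\|\theta_j Z_j\|\le 1+\sqrt k$, and the spread of the weights forced by $\rho_s\le(8k)^{-1}$ all enter. Dealing with the few factors (if any) for which $\|\theta_j t\|$ fails to be small—where only the trivial bound $|\varphi_j|\le1$ is available and the Gaussian decay must be supplied by the remaining factors—together with pinning down the exact constant $\tfrac5{48}$, is the delicate part of the argument.
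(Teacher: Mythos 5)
Your central device---writing $\prod_j\varphi_j(\theta_jt)=\exp\bigl(\sum_j\log\varphi_j(\theta_jt)\bigr)$ and differentiating via a Bell-polynomial expansion of $D^\alpha\exp(\Psi)$---has a genuine gap: it requires every factor $\varphi_j(\theta_jt)$ to be nonvanishing throughout the region $\|t\|\le(16\rho_3)^{-1}$, and your justification of this does not hold quantitatively. From $\rho_3\ge|\theta_j|^3\rho_{3,j}\ge|\theta_j|^3k^{3/2}$ one only gets $|\theta_j|\le\rho_3^{1/3}k^{-1/2}$, hence $\|\theta_jt\|\le|\theta_j|\,(16\rho_3)^{-1}\le k^{-1/2}\rho_3^{-2/3}/16$, which is \emph{large} precisely in the regime of interest (typically $\rho_3\sim\beta_3/\sqrt n$, so $\|\theta_jt\|$ can be of order $n^{1/3}$). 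A bounded mean-zero $\theta_jZ_j$ can perfectly well have a characteristic function with zeros at such arguments (a symmetric two-point law gives a cosine), so $\log\varphi_j$ need not exist on the region and the entire expansion of $D^\alpha\Psi$ in terms of $D^\beta\log\varphi_j$ collapses. You acknowledge this difficulty in your final paragraph, but the fallback you sketch ("use $|\varphi_j|\le1$ for the bad factors") is incompatible with the logarithmic formalism on which the whole derivative computation rests; no repair is actually supplied. A secondary flaw: your single-factor bound $|\varphi_j(\theta_jt)|\le\exp(-\tfrac12\theta_j^2\langle t,\mathrm{Cov}(Z_j)t\rangle+\cdots)$ fails once the quadratic form exceeds $1$, since the Taylor estimate only gives $|1-q|+\text{cubic}$.

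The paper's proof avoids both problems by never taking logarithms. It bounds the \emph{square} $|\varphi_j(\theta_jt)|^2\le 1-\theta_j^2\mathbb{E}\langle Z_j,t\rangle^2+\tfrac23|\theta_j|^3\mathbb{E}|\langle Z_j,t\rangle|^3\le\exp(\cdots)$, an inequality valid for all $t$ whether or not $\varphi_j$ vanishes, sums the exponents over $j$ to produce $-\langle Dt,t\rangle$ plus a cubic term controlled by $\rho_3\|t\|\le\tfrac1{16}$, and invokes Lemma~\ref{lemma2} for $\langle Dt,t\rangle\ge\tfrac34\|t\|^2$. For $\alpha\neq0$ it uses the Leibniz rule for the product (not Fa\`a di Bruno for the exponential of a sum), bounding each differentiated factor by $\overline{c}_{18}(\alpha,k)\theta_j^2\rho_{2,j}\max\{1,\|t\|\}$---the factor $\theta_j^2$ being extracted exactly as you intend, from the mean-zero property when $|\beta|=1$ and from $\|\theta_jZ_j\|\le2$ when $|\beta|\ge2$---and paying only a factor $e^{r/3}$ for the $r$ removed factors in the Gaussian bound on the remaining product. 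Your instinct about where the factor $\theta_j^2$ comes from and why $\sum_j\theta_j^2=1$ closes the sum is exactly right; it is the logarithmic packaging that is unsound.
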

\begin {proof}
The proof of Lemma follows the scheme of proofs of  Lemma 14.3 \cite {B}.
For the truncated random vectors $ \theta_jZ_j = \theta_j(Y_j- \mathbb {E} Y_j)$, where $Y_j = X_j \mathbb{I} (\| \theta_j X_j \| \leq 1),\;j=1\dots n$, we use similar to (8.46 from \cite {B}) inequality for characteristic function 
$$
| \varphi_j (\theta_j t) |^2 \leq 1- \theta_j ^ 2 \mathbb {E} \langle Z_j, t \rangle ^ 2 + \frac {2} {3} | \theta_j | ^ 3 \mathbb {E} | \langle Z_j, t \rangle | ^ 3 $$
$$ \leq \exp \Big (- \theta_j ^ 2 \mathbb {E} \langle Z_j, t \rangle ^ 2 + \frac {2} {3} | \theta_j | ^ 3 \mathbb {E} | \langle Z_j, t \rangle | ^ 3 \Big), $$
further note that
$$
\exp \Big (\theta_j ^ 2 \mathbb {E} \langle Z_j, t \rangle ^ 2- \frac {2} {3} | \theta_j | ^ 3 \mathbb {E} | \langle Z_j, t \rangle | ^ 3 \Big)
$$
$$
\leq \exp \Big ((\mathbb {E} | \theta_j \langle Z_j, t \rangle | ^ 3) ^ {\frac {2} {3}} - \frac {2} {3} | \theta_j | ^ 3 \mathbb {E} | \langle Z_j, t \rangle | ^ 3 \Big) \leq \exp \Big (\frac {1} {3} \Big).
$$
Now we denote the subset $ N_r = \{j_1, j_2 ,\dots, j_r \} $ as a subset of $ N = \{1,2,\dots,n \} $, consisting of $ r $ elements, for $ \| t \| \leq (16\rho_3)^{-1} $ the following chain of inequalities holds
$$
\Big | \prod_ {j \in N \setminus N_r} \varphi_j (\theta_jt) \Big |^2 
\leq \exp \Big (\sum_ {j \in \mathbb {N} \setminus N_r} \Big [-  \theta_j ^ 2 \mathbb {E} \langle Z_j, t \rangle ^ 2 +\frac {2} {3} | \theta_j | ^ 3 \mathbb {E} | \langle Z_j, t \rangle | ^ 3 \Big] \Big)
$$
$$
\leq \exp \Big (- \langle Dt, t \rangle  + \frac {1} {3} \| t \| ^ 2 \Big) \exp \Big (\frac {r} {3} \Big) \leq \exp \Big (\Big (- \frac {3} {4} + \frac {1} {3}  \Big) \| t \| ^ 2 \Big) \exp \Big (\frac {r} {3} \Big)
$$
\begin {equation} \label {d31}
 = \exp \Big (- \frac {5} {12} \| t \| ^ 2 \Big) \exp \Big (\frac{r} {3} \Big).
\nonumber
\end {equation}
For $ \alpha = 0 $ the statement of the Lemma is proved.

Before proceeding to the proof of the case $ \alpha \neq 0 $, consider the modulus of the derivative of the characteristic function of the random vector $ Z_j $
\begin {equation} \label {d32}
   \Big | D_m \varphi_j (\theta_j t) \Big | = | \theta_j | \Big | \mathbb {E} Z_ {j, m} \exp \Big (i \langle \theta_jt, Z_j \rangle \Big) \Big |.
\end {equation}
If a positive vector $ \beta $ satisfies the condition $ | \beta | = 1 $ then, using (\ref{d32}), we have 
$$
\Big | D ^ {\beta} \varphi_j (\theta_j t) \Big | = | \theta_j | \Big | \mathbb {E} Z_ {j, \beta} \Big (\exp \Big (i \langle \theta_jt, Z_j \rangle \Big) -1 \Big) \Big |
$$

\begin {equation} \label {d33} 
\leq | \theta_j | \mathbb {E} \Big | Z_ {j, \beta} \langle \theta_jt, Z_j \rangle \Big |
\leq \theta_j ^ 2 \| t \| \mathbb {E} \| Z_j \| ^ 2 \leq \theta_j ^ 2 \tilde{\rho}_{2,j} \| t \|, 
\end {equation}
we also, still using (\ref{d32}), conclude that for any positive vector $ \beta $ with $ | \beta | \geq 2 $  

\begin {equation} \label {d34}
\Big | D ^ \beta \varphi_j (\theta_j t) \Big | \leq | \theta_j | ^ {| \beta |} \mathbb {E} | Z_j ^ \beta | \leq 2 ^ {| \beta |} \theta_j ^ 2 \rho_{2,j}. 
\end {equation}
Finally, collecting the results (\ref{d33}) and (\ref{d34}), we get that for any non-negative vector $ \beta> 0 $, one has
\begin {equation} \label {d35}
     \Big | D ^ \beta \varphi_j (\theta_j t) \Big | \leq \overline{c}_{18} (\alpha, k) \theta_j ^ 2 \rho_{2,j} \max \{1, \| t \| \}.\nonumber
\end {equation}
Now consider the positive vector $ \alpha> 0 $, according to the rule of differentiation of the product of functions, we obtain that
\begin {equation} \label {d36}
    D ^ \alpha \prod_ {j = 1} ^ n \varphi_j (\theta_j t) = \sum \prod_ {j \in N \setminus N_r} \varphi_j (\theta_j t) D ^ {\beta_1} \varphi_ { j_1} (\theta_ {j_1} t) \dots D ^ {\beta_r} \varphi_ {j_r} (\theta_ {j_r} t),
\end {equation}
where $ N_r = \{j_1,\dots,j_r \}, \; 1 \leq r \leq | \alpha |, $
$\beta_1, \beta_2,\dots, \beta_r $ are vectors that satisfy the conditions
$ | \beta_j | \geq 1 \; \; (1 \leq j \leq r) $ and $ \sum_ {j = 1} ^ r \beta_j = \alpha. $
The number of multiplications in each of the $ n ^ {|\alpha|} $ terms of the expression (\ref {d36}) is
$$ \frac {\alpha_1! \dots \alpha_k!} {\prod_ {j = 1} ^ r \prod_ {i = 1} ^ k \beta_ {ji}! }, $$
where $ \alpha = (\alpha_1, \dots, \alpha_k) $ and $ \beta_j = (\beta_ {j_1}, \dots, \beta_ {j_k}), \; 1 \leq j \leq r. $ Each term in the expression (\ref {d36}) is bounded by the value
\begin {equation} \label {d37}
\exp \Big (\frac {r} {6} - \frac {5} {48} \| t \| \Big) \prod_ {j \in N_r} b_j, 
\end{equation}
where $ b_j = \overline{c}_{18} (\alpha, k) \rho_{2,j} \theta_j ^ 2 \max \{1, \| t \| \}, $
therefore from (\ref {d36}), (\ref {d37}) we obtain
\begin {equation} \label {d37_2} \Big | D ^ \alpha \prod_ {j = 1} ^ n \varphi_j (\theta_j t) \Big | \leq \sum_ {1 \leq r \leq | \alpha |} \overline{c}_{19} (\alpha, r) \exp \Big (\frac {r} {6} - \frac {5} {48} \| t \| \Big) \sum_r \prod_ {j \in N_r } b_j, 
\end{equation}
where the outer summation is over all $ r $ elements from $ N. $
It remains to evaluate the expression
\begin {equation} \label {d37_3}  
\sum_r \prod_ {j \in N_r} b_j \leq \Big (\sum_ {j = 1} ^ n b_j \Big) ^ r = (\overline{c}_{18} (\alpha, k) \rho_2 \max \{1, \| t \| \}) ^ r  \leq
(\overline{c}_{18} (\alpha, k) k) ^ r (1+ \| t \| ^ r),
\end{equation}
putting (\ref{d37_3}) into (\ref{d37_2}), we get (\ref{d72_0} ).
  \end {proof}

\begin {lemma} \label {lemma7}
Let $s=5$, then  for any nonnegative integer vector $\alpha$, one has
\begin{equation}\label{d72}
\mathbb {E_\theta}\int \limits _ {\frac { \lambda\sqrt {n}} {\beta_3} \leq \| t \| \leq \frac {n^{3/2}} {\beta _ 5}} \Big | D ^ \alpha \prod_ {j = 1} ^ n \varphi_j (\theta_jt) \Big | dt \leq c_6 (\alpha,\lambda, k) \frac {\beta_5 } {n^{3/2}},
\end{equation}
where $ \beta_5 $   and $ \varphi_j (t) $ are defined in  (\ref {d6}) and (\ref{d17_2}) 
 , respectively, $\lambda$ is any positive constant.
\end {lemma}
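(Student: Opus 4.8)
The plan is to reduce the integral to a spherical average of the undifferentiated product $\prod_j|\varphi_j(\theta_j t)|$ and to exhibit decay in $\|t\|$ so strong --- produced by the averaging over $S^{n-1}$ --- that the entire annulus contributes far less than $\beta_5/n^{3/2}$. First I would differentiate by the Leibniz rule exactly as in (\ref{d36}): $D^\alpha\prod_{j=1}^n\varphi_j(\theta_j t)$ is a finite sum, over subsets $N_r=\{j_1,\dots,j_r\}$ with $1\le r\le|\alpha|$ and vectors $\beta_1,\dots,\beta_r$ with $\sum_i\beta_i=\alpha$, of terms $\prod_{j\notin N_r}\varphi_j(\theta_j t)\cdot\prod_{i=1}^r D^{\beta_i}\varphi_{j_i}(\theta_{j_i}t)$. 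Bounding each differentiated factor by (\ref{d33})--(\ref{d34}), i.e. $|D^{\beta_i}\varphi_{j_i}(\theta_{j_i}t)|\le \overline c(\beta_i)\,\theta_{j_i}^2\rho_{2,j_i}(1+\|t\|)$, the task reduces to estimating, for each fixed $r\le|\alpha|$,
$$
\mathbb{E}_\theta\int_{\lambda\sqrt n/\beta_3\le\|t\|\le n^{3/2}/\beta_5}(1+\|t\|)^{|\alpha|}\Big(\prod_{i=1}^r\theta_{j_i}^2\rho_{2,j_i}\Big)\prod_{j\notin N_r}|\varphi_j(\theta_j t)|\,dt.
$$
Only $r\le|\alpha|$ factors are removed from the product, and the coefficients are summable, $\sum_j\theta_j^2\rho_{2,j}=\rho_2=k$, so after summation over $N_r$ they contribute at most $k^r$, exactly as in (\ref{d37_3}).

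The heart of the matter is the spherical average of the product of moduli. Here I would apply Cauchy--Schwarz in $\theta$, $\mathbb{E}_\theta\prod_{j\notin N_r}|\varphi_j(\theta_j t)|\le\big(\mathbb{E}_\theta\prod_{j\notin N_r}|\varphi_j(\theta_j t)|^2\big)^{1/2}$, and then symmetrize: with $W_j=Z_j-Z_j'$ for an independent copy $Z_j'$ one has $|\varphi_j(\theta_j t)|^2=\mathbb{E}\exp(i\theta_j\langle t,W_j\rangle)$, so that $\prod_{j\notin N_r}|\varphi_j(\theta_j t)|^2=\mathbb{E}\exp(i\langle\theta,u\rangle)$ with $u\in\mathbb{R}^n$, $u_j=\langle t,W_j\rangle$ for $j\notin N_r$ and $u_j=0$ otherwise. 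The symmetrized vectors are uniformly bounded ($\|\theta_jZ_j\|\le2$); evaluating the spherical integral --- and absorbing the mild dependence of the truncation on $\theta$ as discussed below --- gives $\mathbb{E}_\theta\exp(i\langle\theta,u\rangle)=\omega_n(\|u\|)$, where $\omega_n(r)=\Gamma(n/2)(r/2)^{-(n-2)/2}J_{(n-2)/2}(r)$ is the characteristic function of a single coordinate of the uniform point of $S^{n-1}$; hence $\mathbb{E}_\theta\prod_{j\notin N_r}|\varphi_j(\theta_j t)|\le\big(\mathbb{E}_W\,\omega_n(\|u\|)\big)^{1/2}$.

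It then remains to see that this is super-polynomially small on the annulus. Since $\mathbb{E}_W\|u\|^2=2\sum_{j\notin N_r}\langle t,\mathrm{Cov}(Z_j)t\rangle$ is of order $n\|t\|^2$ (each $\mathrm{Cov}(Z_j)$ is close to $I$, cf. Lemma \ref{lemma2}) and $\|t\|\ge\lambda\sqrt n/\beta_3$, a lower-tail bound for $\|u\|^2$ shows that, outside a $W$-event of negligible probability, $\|u\|\ge c\,n/\beta_3$, which far exceeds the Bessel index $(n-2)/2$; in that range the asymptotics of $\omega_n$ give $|\omega_n(\|u\|)|\le e^{-cn}$ (indeed $\omega_n(r)\approx e^{-r^2/2n}$ in the Gaussian regime and decays like a high negative power of $r$ for $r\gg n$), while on the complementary event one uses $|\omega_n|\le1$ and the smallness of its probability. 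Multiplying by $(1+\|t\|)^{|\alpha|}$ and integrating over the annulus, whose volume is only polynomial in $n$ (of order $(n^{3/2}/\beta_5)^k$), the exponential smallness of $\omega_n$ dominates, so the whole expression is at most $e^{-cn}\le c_6(\alpha,\lambda,k)\beta_5/n^{3/2}$; the finitely many small $n$ are absorbed into $c_6$.

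The hard part is exactly the spherical estimate of the previous paragraph. For an individual atypical $\theta$ (say one weight close to $1$) the integrand need not decay at all on the annulus, so no pointwise bound on $\prod_j|\varphi_j(\theta_j t)|$ can work and one is forced to use the exact average over $S^{n-1}$; this is the ingredient, due to Klartag--Sodin and Bobkov, that the Bhattacharya--Rao scheme lacks. Making it rigorous requires (i) the behaviour of $\omega_n(r)$ near the turning point $r\sim n$, where the Gaussian approximation fails and genuine Bessel asymptotics enter, (ii) a quantitative lower-tail bound for $\|u\|^2=\sum_{j\notin N_r}\langle t,W_j\rangle^2$ excluding the atypically small values uniformly in $\theta$, and (iii) control of the mild dependence of the truncated $Z_j$ on $\theta_j$, which is harmless precisely because the $Z_j$ are uniformly bounded.
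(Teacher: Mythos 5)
Your overall mechanism --- average over the sphere, symmetrize against an independent copy, and exploit the decay of the spherical characteristic function --- is the right one, and it is essentially the Klartag--Sodin mechanism that the paper's proof follows. But the proposal leaves genuine gaps at exactly the three points you flag at the end, and your proposed resolution of each is either missing or incorrect, so as written the argument does not close.

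First, the dependence of the truncated vectors on $\theta$ is not ``mild'' and the $Z_j$ are not uniformly bounded: from (\ref{d8}) one only gets $\|Z_j\|\le 2/|\theta_j|$, and the truncation event itself depends on $\theta_j$, so the interchange of $\mathbb{E}_\theta$ with the expectation over $W$ that would produce $\mathbb{E}_W\,\omega_n(\|u\|)$ is simply unavailable. The paper removes this obstruction \emph{before} symmetrizing, by comparing $\varphi_j$ with the characteristic function $\xi_j$ of the untruncated $X_j$ at the cost of an additive $2k\theta_j^2$ per factor (inequality (\ref{d74_1})); some such step is unavoidable and is absent from your plan. Second, the endgame ``$|\omega_n(\|u\|)|\le e^{-cn}$'' is quantitatively wrong in general. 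The anticoncentration of $\langle t,\hat{X}\rangle^2$ away from zero is obtained only through a Paley--Zygmund argument involving the fourth moments, which is why the per-factor bound (\ref{d75}) carries the cap $\min\{\|t\|^2/n,\,1/\rho_{4,j}\}$; after multiplying over the at least $n/2$ indices with $\rho_{4,j}<2\beta_4$ one gets decay $\exp\big(-c\min\{\|t\|^2,\,n/\beta_4\}\big)$ as in (\ref{d85}), not $e^{-cn}$, and the comparison with $\beta_5/n^{3/2}$ then requires the moment inequalities $\beta_3^2\le k\beta_4$ and $(\beta_4/k^2)^{1/2}\le(\beta_5/k^{5/2})^{1/3}$. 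Relatedly, the ``lower-tail bound for $\|u\|^2$ with negligible failure probability'' invoked in your point (ii) is precisely the hard input: per summand, Paley--Zygmund only gives success probability of order $1/\rho_{4,j}$, so the failure probability you need to beat the polynomial volume of the annulus does not come for free. Third, after pulling out the differentiated factors you are left with $\mathbb{E}_\theta\big[\theta_{j_1}^2\cdots\theta_{j_r}^2\prod_{j\notin N_r}|\varphi_j(\theta_jt)|\big]$, a product of functions of distinct but \emph{dependent} coordinates of $\theta$; a single Cauchy--Schwarz does not decouple it, and the summability over $N_r$ you assert ``as in (\ref{d37_3})'' is a deterministic, fixed-$\theta$ bound that does not commute with $\mathbb{E}_\theta$. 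The paper resolves this with the Carlen--Lieb--Loss inequality (Theorem 1 of \cite{S}, used in (\ref{d84})), which factors the spherical expectation into a product of individual second moments and yields the crucial $(\mathbb{E}_\theta\theta_j^4)^{1/2}=O(1/n)$ per derivative factor in (\ref{d83}). This decoupling device, together with (\ref{d74_1}) and (\ref{d75}), is the actual content of the lemma and is what the proposal is missing.
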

\begin {proof}
The proof of Lemma follows the scheme of the proof of Lemma 3.5 \cite {KS}.
Let us estimate the modulus of the characteristic function of the truncated random vector $Y_j = X_j\mathbb{I} (\| \theta_jX_j \| \leq 1),
$
(see (\ref{d8}), (\ref{d8})), denote $ \xi_j(t) $ as the characteristic function of the random vector $ X_j. $ First, using the Chebyshev inequality, we obtain the estimate
\begin {equation} \label {d74}
\mathbb {E} \mathbb{I} (\| \theta_jX_j \| \geq 1) = P (\| \theta_jX_j \| \geq 1)  \leq \mathbb {E} \| X_j \| ^ 2 \theta_j ^ 2 = k \theta_j ^ 2,
\end{equation}
then applying some transformations and using (\ref{d74}), we obtain the following chain of inequalities 
$$
\Big | \mathbb {E} \exp \Big (i \theta_j \langle t, Y_j \rangle \Big)  \Big | = \Big | \mathbb {E} \exp \Big (i \theta_j \langle t, X_j \rangle \mathbb{I} (\| \theta_jX_j \| \leq 1) \Big) \Big|
$$
$$
= \Big | \mathbb {E} \exp \Big (i \theta_j \langle t, X_j \rangle \mathbb{I} (\| \theta_jX_j \| \leq 1) \Big) \Big (\mathbb{I} (\| \theta_jX_j \| \leq 1) + \mathbb{I} (\| \theta_jX_j \|> 1) \Big) \Big|
$$
$$
 = \Big | \mathbb {E} \exp \Big (i \theta_j \langle t, X_j\rangle \Big) \mathbb{I} (\| \theta_jX_j \| \leq 1) +\mathbb {E} \mathbb{I} (\| \theta_jX_j \|> 1) \Big|
$$
$$
= \Big | \mathbb {E} \Big [\exp \Big (i \theta_j \langle t, X_j \rangle \Big) - \exp \Big (i \theta_j \langle t, X_j \rangle \Big) \mathbb{I} (\| \theta_jX_j \|> 1) \Big]
+ \mathbb {E} \mathbb{I} (\| \theta_jX_j \|> 1) \Big | 
$$
$$
 = \Big | \mathbb {E} \exp \Big (i \theta_j \langle t, X_j \rangle \Big)  + \mathbb {E} \mathbb{I} (\| \theta_jX_j \|> 1) \Big (1 - \exp \Big (i \theta_j \langle t, X_j \rangle \Big)\Big|
$$
$$
\leq | \xi_j (\theta_jt) | + \Big | \mathbb {E} \mathbb{I} (\| \theta_jX_j \|> 1) \Big (1- \exp \Big (i \theta_j \langle t, X_j \rangle \Big) \Big|
$$
$$
\leq | \xi_j (\theta_jt) | + \mathbb {E} \mathbb{I} (\| \theta_jX_j \|> 1) \Big | 1- \exp \Big (i \theta_j \langle t, X_j \rangle\Big) \Big | 
$$
\begin {equation} \label {d74_1}
\leq | \xi_j (\theta_jt) | +2 \mathbb {E} \mathbb{I} (\| \theta_jX_j \|> 1) \leq | \xi_j (\theta_jt) | + 2k \theta_j ^ 2.
\end {equation}
next we are going to show that for any $ r $ the following inequality holds
\begin {equation} \label {d75}
\mathbb {E}_\theta | \xi_j (\theta_jr) | ^ 2 \leq 1 - \hat{c}_{1} \min \Big \{\frac {\| r \| ^ 2} {n},\frac{1}{ \rho_{4,j}}  \Big \}. 
\end {equation}
For $r=0$ the inequality holds automatically, therefore the case with $r>0$ is considered below. Let us denote $ X_j '$ as an independent copy of the random vector $ X_j $,
and define a random vector $ \hat {X} = X_j-X'_j$. If we denote  $ J_n $ as characteristic function of the component of a random vector uniformly distributed on the unit sphere, $ \theta_j. $ Then 
$$
\mathbb {E}_\theta | \xi_j (\theta_jr) | ^ 2 =\mathbb {E}_\theta\mathbb {E}(\exp(i\theta_j\langle r,\hat {X}  \rangle))=\mathbb {E}\mathbb {E}_\theta(\exp(i\theta_j\langle r,\hat {X} \rangle))= \mathbb {E} J_n (\langle r, \hat {X} \rangle)
$$
\begin {equation} \label {d76}
= \mathbb {E} J_n \Big (\| r \| \frac {\langle r, \hat {X} \rangle} {\| r \|} \Big).
\end {equation} 
Lemma 3.3 \cite {KS} implies that the estimate holds for the characteristic function of a random variable uniformly distributed on the unit sphere
\begin {equation} \label {d77}
\mathbb {E} J_n \Big (\| r \| \frac {\langle r, \hat {X} \rangle} {\| r \|} \Big) \leq 1- \hat {c} _{2} \mathbb {E} \min \Big \{\frac {\| r \| ^ 2} {n} \Big (\frac {\langle r, \hat {X} \rangle} {\| r \|} \Big) ^ 2,1 \Big \}.
\end {equation} 
We have to define the random variable $ X '' $ as
$ X '' = \frac {\langle r, \hat {X} \rangle ^ 2} {2 \| r \| ^ 2} $ and $ \tau = \frac {\| r \| ^ 2} {n}, $
then
\begin {equation} \label {d78}
\mathbb {E} X '' = \mathbb {E} \Bigg (\frac {\sum \limits_ {i = 1} ^ {k} r_i ^ 2 \hat {X} _i ^ 2 + 2 \sum \limits_ {1 \leq j <i \leq k} r_jr_i \hat {X} _j \hat {X} _i} {2 \| r \| ^ 2} \Bigg) = \frac {2 \| r \| ^ 2} {2 \| r \| ^ 2} = 1, 
\end {equation} 

\begin {equation} \label {d79}
\mathbb {E} (X '') ^ 2 = \frac {\langle r, \hat {X} \rangle ^ 4} {4 \| r \| ^ 4} \leq \mathbb {E} \frac {\| r \| ^ 4} {4 \| r \| ^ 4} \| \hat {X} \| ^ 4 \leq \frac {2 \rho_{4,j} + 6k ^ 2 } {4} \leq2 \rho_{4,j}.
\end {equation} 
Let us show that $ \mathbb {E} \min \{\tau X '', 1 \} \geq \hat {c}_3 \min \{\tau, 1/\rho_{4,j}\}. $
Since the right-hand side increases with $ \tau $, it suffices to prove the inequality for $ \tau <(10 \rho_{4,j})^{-1}. $ Using (\ref{d78}), (\ref{d79}) and  Lemma 3.1 \cite {KS}, we have
$\mathbb {E} \mathbb{I} (X '' \leq 10 \rho_{4,j}) X '' \geq 4/5,$
so for $ 0 <\tau \leq (10 \rho_{4,j})^{-1} $ one has
$$
\mathbb {E} \min \{\tau X '', 1 \} \geq \mathbb {E} \mathbb{I} (X '' \leq 10 \rho_{4,j}) \min \{\tau X '', 1 \} 
$$
\begin {equation} \label {d80}
= \tau \mathbb {E} \mathbb{I} (X '' \leq 10 \rho_{4,j}) X ''> \frac {\tau} {2},
\end {equation} 
Finally, using (\ref{d76}), (\ref{d77}), (\ref{d80}), we get (\ref{d75}).

Similarly, consider $ Z_j-Z'_j $, where $ Z'_j $ is an independent copy of $ Z_j $, also note that $ \mathbb {E} \theta_j ^ 2 = n^{-1}$ 
and $\mathbb {E} \theta_j ^ 4 =3/n(n+2).$ (see Lemma 5.3.1 \cite{Bob} ).
And, using the inequalities (\ref {d74_1}) and (\ref {d75}), we obtain the estimate
$$ \mathbb {E}_\theta | \varphi_j (\theta_jt) | ^ 2 = \mathbb {E}_\theta \Big [\mathbb {E}  \exp \Big (i \theta_j \langle t, Z_j-Z'_j \rangle \Big)  \Big]
= \mathbb {E}  \Big [\mathbb {E}_\theta  \exp \Big (i \theta_j \langle t, Y_j-Y'_j \rangle \Big) \Big] 
$$
$$
\leq \mathbb {E}_\theta  \Big [\Big (| \xi_j (\theta_jt) | + 2k \theta_j ^ 2 \Big) \Big (| \xi_j (- \theta_jt) | + 2k \theta_j ^ 2 \Big ) \Big] 
\leq \mathbb {E}_\theta  | \xi_j (\theta_j t) | ^ 2 + \frac {2k} {n} + \frac { 12k ^ 2} {n(n+2)}
$$
\begin {equation} \label {d81}
\leq 1-\hat{c}_1 \min \Big \{\frac {\| t \| ^ 2} {n},\frac{1}{ \rho_{4,j}} \Big \} +  \frac {2k} {n} +\frac {12k ^ 2} {n^2}.
\end {equation}
Now since $ \rho_{4,j} \geq k ^ 2,\;j=1\dots n,$ and $ k^3\leq(\beta_3)^2\leq k\beta_4 $, we can note that 
 \begin {equation} \label {d81_2}
 \min \Big \{\frac {\lambda^2} {\beta_3^2},\frac{1}{ \rho_{4,j}} \Big \}\leq
\min \Big \{\frac {\lambda^2} {k^3},\frac{1}{ k^2} \Big \}\leq\frac{1}{k^2}
\end {equation}
and therefore in the considered region $ \| t \| ^ 2 \geq \lambda^2n/ \beta_3^2$  the following inequality holds
\begin {equation} \label {d82}
\Big (1-\hat{c}_1 \min \Big \{\frac {\| t \| ^ 2} {n},\frac{1}{ \rho_{4,j}} \Big \} +  \frac {2k} {n} + \frac {12k} {n^2} \Big) ^ {- 1} \leq \Big(1- \frac {\hat{c}_1} {k ^ 2}\Big) ^ {- 1}. 
\end {equation} 
In Lemma \ref {lemma4} it was shown (\ref{d35}) that for any positive integer vector $\alpha$, one has
$$ \Big | D ^ \alpha \varphi_j (\theta_j t) \Big | \leq \overline{c}_{18} (\alpha, k) \theta_j ^ 2 \rho_{2,j} \max \{1, \| t \| \},
$$
from this it immediately follows
$$ \Big (\mathbb {E}_\theta \Big | D ^ \alpha \varphi_j (\theta_j t) \Big | ^ 2 \Big) ^ {\frac {1} {2}} \leq \overline{c}_{18} (\alpha, k) \rho_{2,j} \max \{1, \| t \| \} \Big (\mathbb {E}_\theta \theta_j ^ 4 \Big) ^ {\frac {1} {2}} 
  $$
 \begin {equation} \label {d83}
 \leq \overline{c}_{18} (\alpha, k) \rho_{2,j} \max \{1, \| t \| \} \frac { \sqrt{3}} {n}.
\end {equation}
Further, using Theorem 1 \cite {S}, we obtain
$$
\mathbb {E}_\theta  \Big | \Big [\prod_ {j \in N_r} \varphi_j (\theta_jt) \Big] D ^ {\beta_1} \varphi_ {j_1} (\theta_ {j_1}) \dots D ^ {\beta_r} \varphi_ { j_r} (\theta_ {j_r}) \Big |
$$
\begin {equation} \label {d84}
\leq \Big [\prod_ {j \in N_r} \left (\mathbb {E}_\theta  | \varphi_j (\theta_jt) | ^ 2 \right) ^ {\frac {1} {2}} \Big] \left (\mathbb {E}_\theta  | D ^ {\beta_1} \varphi_ {j_1} (\theta_ {j_1}) | ^ 2 \right) ^ {\frac {1} {2}} \dots \left (\mathbb {E}_\theta  | D ^ {\beta_r} \varphi_ {j_r} (\theta_ {j_r}) | ^ 2 \right) ^ {\frac {1} {2}}.
\end {equation}
Let us denote the subset of indices $\mathfrak {G} = \Big\{j: \rho_{4,j} <2 \beta_4\Big \} 
$
and we come to the conclusion that
$$ 
\beta_4 = \frac {1} {n} \sum \limits_ {j = 1} ^ {n} \rho_{4,j} \geq \frac {1} {n} \sum \limits_ {j \notin \mathfrak {G}} \rho_{4,j} \geq \frac {n- | \mathfrak {G} |} {n} 16 \beta_4, 
$$
therefore, due to $ | \mathfrak {G} | \geq  n/2, $ $1+x\leq \exp(x)$ and (\ref{d81}) the following chain of inequalities is valid 
$$ 
\prod \limits_ {j = 1} ^ {n} \Big(\mathbb {E}_\theta  | \varphi_j (\theta_jt) |^2\Big)^{\frac{1}{2}}\leq \prod \limits_ {j \in \mathfrak {G}} \Big(\mathbb {E}_\theta | \varphi_j (\theta_jt)  |^2\Big)^{\frac{1}{2}} 
$$
$$
\leq \prod \limits_ {j \in \mathfrak {G}} \Big (1-\hat{c}_4 \min \Big \{\frac {\| t \| ^ 2} {n},\frac{1}{ \rho_{4,j}} \Big \} + 2k \frac {1} {n} + 12 \frac {k ^ 2} {n ^ 2} \Big) ^ {\frac {1} {2}}
$$
$$
\leq \Big (1- \overline {c} _5 \min \Big \{\frac {\| t \| ^ 2} {n}, \frac{1}{ \beta_4}\Big \} + 2k \frac {1} {n} + 12 \frac {k ^ 2} {n ^ 2} \Big) ^ {\frac {n} {4}}
$$
\begin {equation} \label {d85}
\leq \exp \Big (\frac {k} {2} + 3k ^ 2 \Big) \exp \Big (- \hat{c}_5 \min \Big \{\| t \| ^ 2, \frac {n} {\beta_4} \Big \} \Big). 
\end {equation} 
next by (\ref{d82}) and (\ref{d85}) we come to the conclusion
\begin {equation} \label {d86}
\prod \limits_ {j \in N_r} \Big (\mathbb {E}_\theta | \varphi_j (\theta_jt) | ^ 2 \Big) ^ {\frac {1} {2}} \leq \exp \Big (\frac {k} {2} + 3k ^ 2 \Big) \Big(1- \frac {\hat{c}_1} {k ^ 2}\Big) ^ {- r} \exp \Big (- \hat {c} _5 \min \Big \{\| t \| ^ 2, \frac {n} {\beta_4} \Big \} \Big), 
\end {equation}
and similarly, as in the proof of Lemma \ref {lemma4}, using (\ref{d83}), (\ref{d84}), (\ref{d86}) we obtain the estimate
\begin {equation} \label {d87}
\mathbb {E}_\theta  \Big | D ^ \alpha \prod_j ^ n \varphi_j (\theta_j t) \Big | \leq \hat{c}_6 (\alpha, k) (1+ \| t \| ^ {| \alpha |}) \exp \Big (- \hat {c} _5 \min \Big \{\| t \| ^ 2 , \frac {n} {\beta_4} \Big \} \Big)
\end {equation}
and therefore, using (\ref{d81_2}), $\Big(\beta_4/k^2\Big)^{1/2}\leq \Big(\beta_5/k^{5/2}\Big)^{1/3}$ (see Lemma 1 \cite{Ay}) and also noting that $  \min \Big \{\frac {\lambda^2} {\beta_3^2},\frac{1}{ \beta_4} \Big \}\geq \min \Big \{\frac{\lambda^2} {k\beta_4},\frac{1}{ \beta_{4}} \Big \}\geq \beta_4^{-1}\hat{c}_7(k,\lambda)$, we get
$$
\int \limits _ {\frac {\lambda \sqrt{n}} {\beta_3} \leq \| t \| \leq \frac {n} {\beta_4}} \mathbb {E}_\theta  \Big | D ^ \alpha \prod_ {j = 1} ^ {n} \varphi_j (\theta_jt) \Big | dt
$$
$$ \leq
\int \limits _ {\frac {\lambda \sqrt{n}} {\beta_3} \leq \| t \| \leq \frac {n} {\beta_4}} \hat{c}_6 (\alpha, k) (1+ \| t \| ^ {| \alpha |}) \exp \Big (- \hat {c} _5 \min \Big \{\| t \| ^ 2, \frac {n} {\beta_4} \Big \} \Big) dt
$$
$$
\leq \int \limits _ {\frac {\lambda \sqrt{n}} {\beta_3}\leq \| t \|
\leq \frac {n} {\beta_4}} \hat{c}_6  (\alpha, k) (1+ \| t \| ^ {| \alpha |}) \exp \Big (- \hat {c} _7 (k,\lambda)\frac {n} {\beta_4} \Big) dt
$$
$$ \leq \hat{c}_8 (\alpha, k, \lambda) \exp \Big (- \hat {c}_9 (k,\lambda) \frac {n} {\beta_4} \Big)\leq \hat{c}_{10}(\alpha, k,\lambda)\Big(\frac{\beta_4}{n}\Big)^{3/2} \leq c_6(\alpha, k,\lambda)\frac{\beta_5}{n^{3/2}} 
.$$
\end {proof}

\section {Proof of the main theorem} \label {3}
\begin {proof}
We use the following notation for the distributions of random vectors : over all Borel sets 
$$
F_ {X} (B) = P \Big (\sum_ {j = 1} ^ n \theta_jX_j \in B \Big),\;\;
F_ {Y} (B) = P \Big (\sum_ {j = 1} ^ n \theta_jY_j \in B \Big),$$
$$
F_ {Z} (B) = P \Big (\sum_ {j = 1} ^ n \theta_jZ_j \in B \Big).
$$
At first, assume that $ \rho_{5} \leq (8k)^{-1}$ and split the original integral into several terms, for this we add and subtract the distribution of the sum of truncated random variables (\ref {d8}) and  the Edgeworth expansion (\ref{d25_1}) of fourth order (we can get the explicit view of signed measures $\Psi_4$ by Lemma \ref{lemma1} and (\ref{d19}) ).
\begin {equation} \label {d88}
I=\Big | \int \limits_ {B} d (F_X-G \Big | \leq \Big | \int \limits_ {B} d (F_X-F_Y) \Big | + \Big | \int \limits_ {B} d(\Psi_4-G)\Big |+ \Big | \int \limits_ {B} d (F_Y-\Psi_4)\Big |=I_1+I_2+I_3.
\end {equation} 
At first, we consider the first integral $I_1$ and use the following transformation
$$ I_1 \leq \Big | \int \limits_ {B} d \Big (\sum_ {j = 1} ^ nF_ {X_j} \ast\dots\ast F_ {X_ {j-1}} \ast (F_ {X_ {j}} - F_ {Y_ {j}}) \ast F_ {Y_ {j + 1}} \ast\dots\ast F_ {Y_ {n}} \Big) \Big | $$
\begin {equation} \label {d89}
\leq \sum_ {j = 1} ^ n \Big | \int \limits_ {B} d (F_ {X_j} -F_ {Y_j}) \Big | \leq \sum_ {j = 1} ^ nP (\| \theta_jX_j \|> 1) \leq \rho_5,
\end {equation} 
where $ "\ast" $ denotes the convolution of two functions and distributions are defined as $F_ {X_j} (B) = P (\theta_jX_j \in B)$, 
$F_ {Y_j} (B) = P (\theta_jY_j \in B), \; j = 1,\dots,n.$

Next we estimate the second integral $I_2$ using the same scheme as Bobkov in \cite{Bob}. Since  for identically distributed random vectors $|\kappa_\nu|=l_4(\theta)|\kappa_{\nu,1}|$, where $ l_p(\theta)=\sum_{j=1}^n|\theta_j|^p$ , we get one
$$
I_2
=\Big | \int \limits_ {B} \frac{3}{n} P_2(-\phi:\{ \kappa_{\nu,1} \})(x)- l_4(\theta)P_2(-\phi:\{ \kappa_{\nu,1} \})(x)dx\Big |
$$
\begin {equation} \label {d90}
\leq \int \limits_ {B}\Big(\Big|\frac{3}{n+2}-l_4(\theta)\Big|+\Big|\frac{6}{n(n+2)}\Big|\Big)\Big|P_2(-\phi:\{\kappa_{\nu,1}\})(x)dx\Big|,
\end {equation} 
recalling that $|\kappa_{\nu,1}|\leq c(\nu)\rho_{|\nu|,1}$, $\rho_{4,1}=\beta_4$ and $k^2\leq\beta_4\leq (\beta_5)^{ \frac{4}{5} }\leq\beta_5$ we have
\begin {equation} \label {d91}
\Big|\int P_2(-\phi:\{\kappa_{\nu,1}\})(x)dx\Big|=\Big|\int \sum_{|\nu|=4}\frac{\kappa_{\nu,1}}{\nu!}D^\nu\phi(x)dx\Big|
\leq\hat{C}_1(k)\beta_4\leq\hat{C}_1(k)\beta_5,
\end {equation} 
also (see in details chapter 5.3 \cite{Bob} ) by Lemma  5.3.1 \cite{Bob} variance of $l_4(\theta)$ is bounded 
\begin {equation} \label {d92_2}
\mathbb{E}_\theta|l_4(\theta)-\mathbb{E}_\theta l_4(\theta)|\leq \mathbb{E}_\theta|l_4(\theta)-\mathbb{E}_\theta l_4(\theta)|^2)^{\frac{1}{2}}\leq(24/n^3)^{\frac{1}{2}}.
\end {equation} 
 Summing up the results, we use the  (\ref{d90}), (\ref{d91}) and  (\ref{d92_2}) and get the estimation
\begin {equation} \label {d91_1}
I_2\leq C_1(k) \frac{\beta_5}{n^{3/2}}.
\end {equation}
To estimate the third term $I_3$, we  split this integral into the sum of the three ones
$$ 
\Big | \int \limits_ {B} d F_Y-\Psi_4)\Big | = \Big | \int \limits_ {B_n = B + \{A_n\}} d F_Z-\Psi_4^{\prime\prime}\Big| 
$$
\begin {equation} \label {d92}
\leq \Big | \int \limits_ {B_n} d (F_Z- \Psi_4^{\prime} )\Big | + \Big | \int \limits_ {B_n} d (\Psi_4^{\prime}- \Psi_4) \Big | + \Big | \int \limits_ {B_n} d (\Psi_4^{\prime\prime} - \Psi_4) \Big |=I_1^{\prime}+I_2^{\prime}+I_3^{\prime}.
\end {equation}
where $\Psi_4^{\prime\prime}(B)=\Psi_4(B+A_n).$ Using Lemma \ref{lemma7}, we can show that the integrals $I_1^{\prime}$ and $I_2^{\prime}$ are bounded 
\begin {equation} \label {d93}
I_1^{\prime}+I_2^{\prime} \leq  C_2(k)\rho_5.
\end {equation}
Now we consider $I_3^{\prime}$, for this we use the smoothing inequality (Corollary 11.5 [2])
\begin {equation} \label {d94}
I_3^\prime\leq
\hat{C}_3 \int \Big | d ((F_Z-\Psi_4^{\prime}) \ast K_ \epsilon) \Big | + C_4 (k) \epsilon,
\end {equation}
where $ \epsilon = \hat{c}(k)\beta_5/n^{3/2}$ and $ K_ \epsilon (x) $ is a kernel function (see 13.8-13.13 in \cite {B}),
whose characteristic function $ \widehat {K}_\epsilon (t) = 0 $ for $ \| t \|> n^{3/2}/\beta_5. $
By Lemma 11.6 \cite {B}, one has
$$
\int \limits \Big | d ((F_Z- \Psi_4^{\prime}) \ast K_ \epsilon) \Big | \leq \hat {C}_4 (k) \max \limits_ {0 \leq | \alpha + \beta | \leq k + 1} \int \Big | D ^ \alpha (\hat {F} _Z- \hat{\phi}_{\overline{0}, D} (t)) D ^ \beta \hat {K} _ \epsilon (t) \Big | dt.
$$
Since $| D ^ \beta \hat {K} _ \epsilon (t) | \leq \hat {c_1}, $ we get that
\begin {equation} \label {d94_1}
\int \limits \Big | D ^ \alpha (\hat {F} _Z- \hat{\phi}_{\overline{0}, D}) (t) D ^ \beta \hat {K} _ \epsilon (t) \Big | dt \leq \hat {C} _5 (k) \int \limits _ {\| t \| \leq \frac{n^{3/2}}{\rho_5}} \Big | D ^ \alpha (\hat {F} _Z- \hat{\phi}_{\overline{0}, D}) (t) \Big | dt .
\end {equation}
Now we denote the values $ E_n = c_1 (k, k + 5) \min \Big\{\eta_ {k + 5} ^ {- \frac {1} {k + 5}}, \eta_ {k + 3} ^ {- \frac {1} {k + 5}} \Big\}$ and $E_n^\prime=\sqrt{4/5}E_n $ (see (\ref {d12})), where $ c_1 (k, k + 5) $ is a constant from the statement of Lemma 3 \cite{Ay}, further, we add and subtract terms of the asymptotic expansion of the logarithm of the characteristic function. Considering that by definition $ \hat {F} _Z = \prod_ {j = 1} ^ n \varphi_1 (\theta_jt)$ (see (\ref{d17_1}), (\ref{d17_2})), we can get, similarly to (71) \cite {Saz}, the following inequality
$$ 
\int \limits _ {\| t \| \leq \frac{n^{3/2}}{\beta_5}} \Big | D ^ \alpha (\hat {F} _Z- \hat{\phi}_{\overline{0}, D}) (t) \Big | dt
$$
$$ 
\leq
\int \limits _ {\| t \| \leq E_n^\prime } \Big | D ^ \alpha \Big [\prod_ {j = 1} ^ n \varphi_1 (\theta_jt) - \exp \Big (- \frac {1} {2} \langle Dt, t \rangle \Big) \sum_ { r = 0} ^ {k+2} \hat{P}_r (it: \{ \tilde{\kappa}_\nu\}) \Big] \Big | dt
$$

$$ + \int \limits _ { E_n^\prime \leq \| t \| \leq \frac{1} {16\rho_3} } \Big | D ^ \alpha \prod_ {j = 1} ^ n \varphi_1 (\theta_jt) \Big | dt
+ \int \limits_{\frac{1} {16\rho_3}  \leq \| t \| \leq \frac{n^{3/2}}{\beta_5} } \Big | D ^ \alpha \prod_ {j = 1} ^ n \varphi_1 (\theta_jt) \Big | dt
$$

$$ 
+ \int \limits _ { E_n^\prime \leq \| t \|} \Big | D ^ \alpha \exp \Big (- \frac {1} {2} \langle Dt, t \rangle \Big) \Big | dt
+ \int \Big | D ^ \alpha \sum_ {r = 3} ^ {k+2} \hat{P}_r (it: \{ \tilde{\kappa}_\nu\}) \exp \Big (- \frac {1} {2} \langle Dt, t \rangle \Big) \Big | dt 
$$
\begin {equation} \label {d95} = I^{\prime\prime}_1 + I^{\prime\prime}_2 + I^{\prime\prime}_3 + I^{\prime\prime}_4 + I^{\prime\prime}_5 .
\end {equation} 
Since $\|\mathbb{E}Y_j\|^s\leq(\rho_2(Y_j))^{\frac{s}{2}}\leq\rho_s(Y_j)$ and $\rho_1(Y_j) \leq 1$, then the following chain of trivial inequalities is valid for $ 1 \leq m \leq s-1,\;j=1\dots n$
$$
\rho_s (Q \theta_jZ_j) \leq \| Q\| ^{s}\rho_s (\theta_jZ_j)
  = \| Q\| ^{s}\rho_s (\theta_j (Y_j- \mathbb {E} Y_j)),
$$
\begin{equation} \label {d13}
\leq\| Q\| ^ {s} 2 ^ s \rho_s (\theta_jY_j)
\leq\| Q\| ^ {s} 2 ^ s \rho_ {s-m} (\theta_jX_j).
\end{equation}
Then by Lemma \ref {lemma2}, we obtain
$$ \det Q \leq \| Q ^ 2 \| ^ {\frac {k} {2}} \leq \Big (\frac {4} {3} \Big) ^ {\frac {k} {2 }},\;\;
\| Qt \| \geq \| D \| ^ {- \frac {1} {2}} \| t \| \geq \Big (\frac {4} {5} \Big) ^ {\frac {1} {2}} \| t \|,
$$
\begin {equation} \label {d96_1}
\Big\{\| Qt \| \leq  E_n^\prime \Big\} \subset \Big\{\| t \| \leq c_1 (k, k + 5) \min \Big\{\eta_ {k + 5} ^ {- \frac {1} {k + 5}}, \eta_ {k + 5} ^ {- \frac {1 } {k + 3}} \Big\} \Big\},
\end{equation}
where  $ Q^2=D^{-1} $. Also, take into account that for any $ s \geq4 $ from (\ref {d13}) it follows
\begin {equation} \label {d96}
\eta_s \leq 2 ^ s \| Q \| ^ s \sum_ {j = 1} ^ n \rho_5 (\theta_j X_j) = \| Q \| ^ s2 ^ s \rho_5 \leq 2 ^ s  (4/3) ^ {s/2} \rho_5.
\end{equation}
Substituting $ t = Qt $, $ s = k + 5 $ and using Lemma \ref {lemma3} \cite{Ay}, (\ref{d96_1}) and (\ref{d96}), we come to the fact that
$$ 
I_1^{\prime\prime} = \int \limits _ {\| Qt \| \leq E_n^\prime } \Big | D ^ \alpha \Big [\prod_ {j = 1} ^ n \varphi_1 (\theta_jQt) - \exp \Big (-\frac {  \| t \| ^ 2} {2} \Big) \sum_ {r = 0} ^ {k+2} \hat{P}_r (iQt, \{ \tilde{\kappa}_v\}) \Big] \det Q \Big | dt
$$
$$
\leq \int \limits _ {\| t \| \leq E_n} \Big (\frac {4} {3} \Big) ^ {\frac {k} {2}} \Big | D ^ \alpha \Big [\prod_ {j = 1} ^ n \varphi_1 (\theta_jQt) - \exp \Big (-\frac { \| t \| ^ 2} {2} \Big) \sum_ {r = 0} ^ {k+2} \hat{P}_r (iQt, \{ \tilde{\kappa}_v\}) \Big] \Big | dt
$$
$$
\leq \int \Big (\frac {4} {3} \Big) ^ {\frac {k} {2}} c_2 (k, k + 5) \eta_{k + 5} (\| t \| ^ {k + 5- | \alpha |} + \| t \| ^ {3 (k + 3) + | \alpha |}) \exp \Big (- \frac {1} {4} \| t \| ^ 2 \Big) dt
$$
\begin {equation} \label {d97}
\leq \hat {C_6} (\alpha, k) \eta_ {k + 5} \leq 2 ^ {k + 5}  (4/3) ^ {\frac {k + 5} {2}} \hat {C_6} (\alpha, k) \rho_5\leq C_5 (\alpha, k) \rho_5.
\end {equation} 
For the second integral we use inequality (\ref{d72_0}) from Lemma \ref{lemma6} and (\ref{d13})  and get
$$
I_2^{\prime\prime}
\leq \int \limits _ { E_n^\prime  \leq \| t \| \leq\frac{1} {16\rho_3} } c_5 (\alpha, k) (1+ \| t \| ^ {| \alpha |}) \exp \Big (- \frac {5} {48} \| t \| ^ 2 \Big) dt
$$
$$
\leq \int \limits (\| t \| ^ {- 1}  E_n^\prime  ) ^ {- k-5} c_5 (\alpha, k) (1+ \| t \| ^ {| \alpha |}) \exp \Big (- \frac {1} {48} \| t \| ^ 2 \Big) dt
$$
$$
\leq \hat {C} _7 (\alpha, k) \min \Big\{\eta_ {k + 5} ^ {- \frac {1} {k + 5}}, \eta_ {k + 5} ^ {- \frac {1} {k + 3}} \Big\} ^ {- k-5}
= \hat {C} _7 (\alpha, k) \eta_ {k + 5} \max \Big\{1, \eta_ {k + 5} ^ {\frac {2} {k+3}} \Big\}
$$
\begin {equation} \label {d98}
\leq \hat {C} _7 (\alpha, k) 2 ^ {k + 5} \Big (\frac {4} {3} \Big) ^ {\frac {k + 5} {2}} \rho_5 \Big(1+ \eta_ {k + 5} ^ {\frac {2} {k + 3}}\Big) \leq C_6 (\alpha, k) \rho_5.
\end {equation} 
To consider $I_3^{\prime\prime}$ we split the integral into two parts  
$I_3^{\prime\prime} =\mathbb{I}(\Omega_1)I_3^{\prime\prime}+\mathbb{I}(\Omega_2)I_3^{\prime\prime},$ where $$\Omega_1=\{16\rho_3>  \beta_3/\sqrt n\},\;\;\;\Omega_2=\{16\rho_3\leq\beta_3/\sqrt n\}.$$
At first, it was shown in Lemma
\ref{lemma6} $| D ^ \beta \varphi_j (\theta_j t)  | \leq \overline{c}_{18} (\alpha, k) \theta_j ^ 2 \rho_{2,j} \max \{1, \| t \| \}$, therefore
we get the following
\begin {equation} \label {d100}
I_3^{\prime\prime}
\leq\int \limits_{0 \leq \| t \| \leq n^{3/2} } \Big | D ^ \alpha \prod_ {j = 1} ^ n \varphi_1 (\theta_jt) \Big | dt
\leq \tilde{C}_8(\alpha, k)\int \limits_{0 \leq \| t \| \leq n^{3/2} }  dt\leq \tilde{C}_9(\alpha,k)n^{\frac{3k}{2}},
\end{equation}
then by inequality from Lemma 5.3.3 \cite{Bob} with  $r=16/33$ and (\ref{d100}) we have

$$\mathbb{E_\theta}\mathbb{I}(\Omega_1)I_3^{\prime\prime}\leq \exp(-n^{\frac{2}{3}})\tilde{C}_9(\alpha,k)n^{\frac{3k}{2}}$$
\begin {equation} \label {d101}
\leq \tilde{C}_{10}(\alpha,k)\exp(-\tilde{c}_1(k)n^{\frac{2}{3}})\leq \overline{C}_1(a,k)\beta_5/n^{3/2}.
\end{equation}
next recall that for identically distributed random vectors  $\beta_s=\rho_{s,1},\; s\geq1$, by inequality (\ref{d72}) from Lemma \ref{lemma7} we have
\begin {equation} \label {d102}
\mathbb {E}\mathbb{I}(\Omega_2)I_3^{\prime\prime}\leq
 \mathbb {E}\int \limits _ {\frac{\sqrt n}{\beta_3} \leq \| t \| \leq \frac{ n^{3/2}}{\beta_5}} \Big | D ^ \alpha \prod_ {j = 1} ^ n \varphi_1 (\theta_jt) \Big | dt \leq \overline{C}_2 (\alpha, k) \frac {\beta_5 } {n^{3/2}},
 \end{equation}
from  (\ref{d101}) and (\ref{d102}) it follows
\begin {equation} \label {d103}
\mathbb{E}_\theta I_3^{\prime\prime}\leq C_7 (\alpha, k) \frac {\beta_5 } {n^{3/2}}.
\end{equation}
In order to bound the  integrals $I_4^{\prime\prime}$ and $I_5^{\prime\prime}$, we use the  Lemma 6 \cite{Ay} and have ones
$$
I_4^{\prime\prime} \leq \int \limits _ { E_n^\prime \leq \| t \|} C_{11} (\alpha, k) (1+ \| t \| ^ {| \alpha |}) \exp \Big (- \frac {3} {8} \| t \| ^ 2 \Big ) dt
$$
\begin {equation} \label {d104}
\leq \int  (\| t \| ^ {- 1} E_n^\prime ) ^ {- k-5}  \overline{C}_{3} (\alpha, k) (1+ \| t \| ^ {| \alpha |}) \exp \Big (- \frac {3} {8} \| t \| ^ 2 \Big) dt
\leq C_8 (\alpha, k) \rho_5, 
\end {equation} 

\begin {equation} \label {d105}
I_5^{\prime\prime}
\leq \sum_ {r = 3} ^ {k+2} \overline{C}_{4} (\alpha, k, r) \rho_ {r + 2}(1+\rho_{2}^{r-1}) \leq C_9 (\alpha, k) \rho_5.
\end {equation} 
 Collecting results (\ref{d94}), (\ref{d94_1}), (\ref{d95}), (\ref{d97}), (\ref{d98}), (\ref{d104}) and (\ref{d105}), we get 
\begin{equation} \label {d106}
 I_3^\prime\leq C_9(k)\Big(\rho_5+I_3^{\prime\prime} +\frac{\beta_5}{n^{3/2}}\Big),
\end {equation}
finally, in case when $\rho_5\leq(8k)^{-1}$, we use (\ref{d88}), (\ref{d89}), (\ref{d91_1}),  (\ref{d106}) and have bound 
\begin {equation} \label {d107}
I\leq C^\prime(k)\Big(\rho_5+I_2 +I_3^{\prime\prime} +\frac{\beta_5}{n^{3/2}}\Big),
\end {equation}
Next in case if $\rho_5>(8k)^{-1}$, then there is obviously a constant $C^{\prime\prime}(k)$ such that
\begin {equation} \label {d108}
I\leq C^{\prime\prime}(k)\rho_5.
\end {equation}
Therefore, denoting sets $\Gamma_1=\{\rho_5\leq(8k)^{-1}\}$ and $\Gamma_2=\{\rho_5>(8k)^{-1}\}$ and using (\ref{d107}) and (\ref{d108}),  we conclude
\begin {equation} \label {d109}
I\leq  \mathbb{I}(\Gamma_1)I+ \mathbb{I}(\Gamma_2)I\leq \overline{C}(k)\Big(\rho_5+I_2 +I_3^{\prime\prime} +\frac{\beta_5}{n^{3/2}}\Big).
\end {equation}
To complete the proof of (\ref{d0}), we have to use (\ref{d91_1}), (\ref{d103}) and apply Lemma 5.3.1 \cite{Bob} with $p=5$ in (\ref{d109}). 
\end {proof}

\end{fulltext}

\newpage

\end{document}